\documentclass[11pt]{article}
\usepackage[letterpaper,twoside,outer=1in,vmargin=1in,]{geometry}
\usepackage{setspace}

\usepackage{amsmath,amsfonts,amsthm,url,graphicx,subcaption,float,booktabs,bbm,enumitem,natbib}
\usepackage{algorithm,algpseudocode}
\usepackage{tikz,caption,forest}\usepackage{multicol}
\tikzset{
	treenode/.style = {shape=rectangle, rounded corners,
		draw, align=center, 
		minimum height=2ex, text depth=0.25ex,
		top color=white, bottom color=blue!20},
	root/.style     = {treenode, font=\Large\rmfamily, bottom color=red!30},
	env/.style      = {treenode, font=\ttfamily\normalsize},
}
\usetikzlibrary{arrows.meta}
\algnewcommand{\Initialize}[1]{%
	\State \textbf{Initialize:}
	\Statex \hspace*{\algorithmicindent}\parbox[t]{.8\linewidth}{\raggedright #1}
}
\algnewcommand{\algorithmicgoto}{\textbf{go to}}%
\algnewcommand{\Goto}[1]{\algorithmicgoto~step~\ref{#1}}%
\usepackage[normalem]{ulem}

\usepackage[hidelinks,bookmarksnumbered=true]{hyperref}
\usepackage{todonotes,bm}
\usepackage{tikz}
\usepackage{pgfplots}
\pgfplotsset{compat=newest}
\pgfdeclareplotmark{mystar}{
\node[star,star point ratio=2.25,minimum size=6pt,
inner sep=0pt,draw=black,solid,fill=red] {};
}
\usetikzlibrary{decorations.pathreplacing}
\usepackage{xcolor}

\usepgflibrary{arrows}
\usetikzlibrary{calc}
\usetikzlibrary{plotmarks}
\usetikzlibrary{arrows.meta,positioning}
\usetikzlibrary{patterns}

\usepackage{url}
\usepackage{longtable,tabularx,multirow,makecell}
\usepackage{amssymb}
\usepackage{psfrag}
\usepackage{amsmath}
\usepackage{setspace}
\newcommand{\exclude}[1]{}
\usepackage{bm}
\usepackage{mathtools}
\usepackage{bbm}
\usepackage[flushleft]{threeparttable}
\usepackage{algorithm}
\usepackage{algpseudocode}
\algdef{SE}[DOWHILE]{Do}{doWhile}{\algorithmicdo}[1]{\algorithmicwhile\ #1}%
\algnewcommand{\Or}{\textbf{or}}
\algnewcommand{\And}{\textbf{and}}

\usepackage{thmtools} 
\usepackage{thm-restate}

\usepackage{cleveref}
\usepackage{tikz}
\usetikzlibrary{calc}
\usetikzlibrary{plotmarks}
\usetikzlibrary{backgrounds}

\declaretheorem[name=Theorem]{theorem}
\declaretheorem[name=Proposition]{proposition}
\declaretheorem[name=Lemma]{lemma}

\declaretheorem[name=Definition]{definition}

\def\hat{\widehat}

\def \R{\mathcal{R}}

\DeclarePairedDelimiter\ceil{\lceil}{\rceil}
\DeclarePairedDelimiter\floor{\lfloor}{\rfloor}

\DeclareMathOperator{\conv}{conv}

\renewcommand*{\qed}{\hfill\ensuremath{\square}}

\usepackage{todonotes,bm}

\newtheorem{conj}[theorem]{Conjecture}
\newcommand{\T}{\mathsf{\scriptscriptstyle T}}
\usepackage{comment}

\usepackage{enumitem}
\usepackage{bigints}
\usepackage{mwe}

\renewcommand{\arraystretch}{1.5}

\let\cline\cmidrule

\allowdisplaybreaks

\newcommand{\ack}[1]{}

\renewcommand{\citep}[1]{\cite{#1}}
\renewcommand{\citet}[1]{\cite{#1}}

\author{Santanu S. Dey$^1$, Nan Jiang$^2$, Aleksandr M. Kazachkov$^3$, Andrea Lodi$^4$, Gonzalo Mu\~{n}oz$^5$\\
$^1$ Georgia Institute of Technology, Atlanta, GA, USA (santanu.dey@isye.gatech.edu)\\
$^2$ Hong Kong University of Science and Technology, Hong Kong, China  (nanjiang@ust.hk)\\
$^3$ University of Florida, Gainesville, FL, USA (akazachkov@ufl.edu)\\
$^4$ Jacobs Technion-Cornell Institute, Cornell Tech, New York, NY, USA  (al748@cornell.edu)\\
$^5$ Universidad de Chile, Santiago, Chile  (gonzalo.m@uchile.cl)
}
\title{Chvátal-Gomory Rounding of Eigenvector Inequalities for QCQPs}

\begin{document}
\maketitle
\begin{abstract}
   We introduce and analyze a class of valid inequalities for nonconvex quadratically constrained optimization problems (QCQPs)
which we call \emph{Eigen-CG inequalities}. 
These inequalities are obtained by applying a Chvátal--Gomory (CG) rounding to the well-known eigenvector inequalities for QCQPs, 
and transferring binary-valid inequalities to the continuous setting via a result of Burer and Letchford (2009).
We define three nested subfamilies and prove that they are strictly contained in one another. 
However, we show that the convex conic closure of two of these subfamilies is equal and, in fact, coincides with the
Boros--Hammer inequalities---a powerful family of inequalities that include, in particular, the triangle and McCormick inequalities.
Using this CG perspective, we also prove that dense Eigen-CG
inequalities are ineffective when used with the standard SDP+McCormick relaxation. 
This provides a complementary perspective on what is observed in practice: that sparse inequalities are impactful. 
Finally, based on these insights, we develop a computational strategy to find sparse Eigen-CG cuts and verify their effectiveness in nonconvex
QCQP instances. Our results confirm that density quickly degrades effectiveness, but that including sparse inequalities beyond triangle
inequalities can provide significant improvements in dual bounds.
\end{abstract}
\maketitle

\textbf{Key words.} Eigen-CG cuts, Boros--Hammer inequalities, QCQP

\section{Introduction}

We consider nonconvex quadratically constrained quadratic problems (QCQPs), where the feasible region is a compact set.
Without loss of generality, these can be modeled as

\begin{equation}
\label{QCQP}\tag{QCQP}
\begin{aligned}
\min_{x \in \R^n} \quad &x^\T Q_0 x + c_0^\T x \\
&x^\T Q_i x + c_i^\T x + d_i \leq 0 &\quad& i=1,\ldots, m\\
&x\in [0,1]^n.
\end{aligned}
\end{equation}

These are highly expressive models with wide applicability.
For example, they can represent any mixed-integer polynomial optimization problem with
a compact feasible region, which already captures a broad set of real-world problems.
Accompanying this expressiveness, there are important computational and theoretical challenges related to advancing QCQP solution techniques. 
Two major research thrusts are developing methods
for finding good solutions and for certifying their quality.

One successful approach for the latter is finding good bounds for QCQPs via \emph{cutting planes}, or \emph{cuts}; these can yield computationally tractable relaxations of QCQPs, which can also be iteratively refined.
Most common relaxations for \eqref{QCQP} can be deduced from the following reformulation, adding a symmetric matrix variable $X$:
\begin{equation}
\label{QCQP-Ext}
\begin{aligned}
\min_{x,X}
\quad &\langle X , Q_0 \rangle + c_0^\T x \\ 
&\langle X, Q_i \rangle + c_i^\T x + d_i \leq 0 &\qquad& i=1,\ldots, m \\ 
& x\in [0,1]^n \\
& X - xx^\T = 0,
\end{aligned}
\end{equation}
where $\langle A, B \rangle = \textup{trace}(A^{\top}B)$ is the standard inner product of matrices.
Here, all the non-convexity is absorbed in $X - xx^\T = 0$, and thus, a convex relaxation of this constraint yields a convex 
relaxation of the problem. 
One approach is a convex combination of all rank-1 matrices;
adopting the notation of Burer and Letchford~\cite{burer2009nonconvex} on \emph{\textbf{q}uadratic \textbf{p}rograms with \textbf{b}ox constraints}, we define:
\begin{equation}\label{eq:QPB}
\mbox{QPB}_n =\conv \{(x,X)\in [0,1]^{n+{n+1 \choose 2}}\,:\, X_{i,j} = x_i x_j, \, 1\leq i \leq  j \leq n\}.
\end{equation}
Many important families of valid inequalities for \eqref{QCQP-Ext} can be obtained from $\mbox{QPB}_n$.
This includes the McCormick inequalities \cite{mccormick1976computability}, which state that for every $i\neq j$,
\begin{equation}\label{eq:McCormick}
X_{i,j} \leq x_i,\quad X_{i,j} \leq x_j, \quad X_{i,j}\geq 0, \quad X_{i,j} \geq  x_i + x_j - 1, 
\end{equation}
and the triangle inequalities \cite{padberg1989boolean}, which state that for every triplet $i,j,k$ it holds that
\vspace{-\abovedisplayskip}
\begin{subequations}\label{eq:triangle}
\begin{align}
X_{i,j} &\ge X_{i,k} + X_{j,k} - x_k, \label{tri:1}\\
X_{i,k} &\ge X_{i,j} + X_{j,k} - x_j, \label{tri:2}\\
X_{j,k} &\ge X_{i,j} + X_{i,k} - x_i, \label{tri:3}\\
X_{i,j} + X_{i,k} + X_{j,k} &\ge x_i + x_j + x_k - 1. \label{tri:4}
\end{align}
\end{subequations}
These were originally developed for the binary case, but they are also valid for the continuous setting. We further expand on these and more inequalities below.\\

Padberg~\cite{padberg1989boolean} introduced the related set called the \emph{Boolean quadric polytope}:%
\footnote{Some authors refer to this as the correlation polytope~\cite{deza1997geometry, pitowsky1991correlation}.}
\begin{equation}\label{eq:BQP}
\mbox{BQP}_n = \conv\{(x,X)\in \{0,1\}^{n+{n\choose 2}}\,:\, X_{i,j} = x_i x_j, \, 1\leq i <  j \leq n\}.
\end{equation}
Note that $\mbox{BQP}_n$ does not include variables $X_{i,i}$, as $x_i^2 = x_i$ whenever $x$ is binary.
One remarkable, and perhaps counterintuitive, result by Burer and Letchford~\cite{burer2009nonconvex} states that valid inequalities for $\mathrm{BQP}_n$ are also valid for $\mathrm{QPB}_n$. %

\begin{proposition}[Corollary~1 in \cite{burer2009nonconvex}] \label{prop:burerletchford}
If a linear inequality
\[\sum_{1\leq i < j \leq n}\beta_{ij} X_{i,j} + \sum_{i=1}^n \alpha_i x_i \leq \gamma \]
is valid for $\mbox{BQP}_n$, then it is valid for $\mbox{QPB}_n$ as well.
\end{proposition}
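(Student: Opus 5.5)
Since both sets are convex hulls and the inequality is linear, it suffices to check validity at the generating points of $\mathrm{QPB}_n$, namely at $(x,X)$ with $x\in[0,1]^n$ and $X_{i,j}=x_ix_j$. The inequality does not involve the diagonal entries $X_{i,i}$, so the task is to show that the function
\[
f(x) \;=\; \sum_{1\le i<j\le n}\beta_{ij}x_ix_j + \sum_{i=1}^n \alpha_i x_i
\]
satisfies $f(x)\le\gamma$ for every $x\in[0,1]^n$. Validity for $\mathrm{BQP}_n$ gives exactly $f(x)\le \gamma$ for $x\in\{0,1\}^n$. So the claim reduces to showing that the maximum of $f$ over the box is attained at a vertex.

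The key observation is that $f$ is \emph{multilinear}: it has no squared terms, so for each fixed index $k$, if the other coordinates are held fixed, $f$ is an affine function of $x_k$. Concretely, $f(x)=x_k\bigl(\alpha_k+\sum_{j\ne k}\beta_{kj}x_j\bigr)+(\text{terms not involving }x_k)$, with $\beta_{kj}$ read symmetrically.

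The plan is then a rounding argument. Start from any $x\in[0,1]^n$ and process the coordinates $k=1,\dots,n$ in turn. At step $k$, replace $x_k$ by whichever of $0$ or $1$ does not decrease the affine function $t\mapsto f(x_1,\dots,x_{k-1},t,x_{k+1},\dots,x_n)$ on $[0,1]$. An affine function on an interval attains its maximum at an endpoint, so this step never decreases $f$. After $n$ steps we reach some $\bar x\in\{0,1\}^n$ with $f(x)\le f(\bar x)\le\gamma$.

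Finally, the pair $(x,X)$ with $X=xx^\T$ satisfies the inequality. The diagonal $X_{i,i}$ has zero coefficient, so it does not matter that $x_i^2\neq x_i$ in general. Taking convex combinations extends the inequality to all of $\mathrm{QPB}_n$.

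There is no real obstacle here. The only point needing care is that the absence of diagonal terms is exactly what makes $f$ affine in each variable. This is why the transfer from $\mathrm{BQP}_n$ works, and it would fail for inequalities involving $X_{i,i}$.
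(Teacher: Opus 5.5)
Your proof is correct and complete. The paper gives no proof here; it cites the result from Burer and Letchford. What you supply is the standard self-contained argument, and each step holds:
\begin{itemize}
\item validity on the generators of $\mathrm{QPB}_n$ extends to their convex hull, since the inequality is linear;
\item $f$ has no squared terms, so it is affine in each coordinate;
\item rounding one coordinate at a time to $0$ or $1$ never decreases $f$, so its maximum over $[0,1]^n$ is attained at a binary point, where $\mathrm{BQP}_n$-validity applies;
\item the zero coefficients on $X_{i,i}$ make $x_i^2\neq x_i$ irrelevant.
\end{itemize}
Your argument also gives slightly more than stated. The reverse inclusion is trivial: binary generators of $\mathrm{QPB}_n$ project onto generators of $\mathrm{BQP}_n$. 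Combined with it, your argument shows that the projection of $\mathrm{QPB}_n$ onto the $(x,(X_{i,j})_{i<j})$ coordinates equals $\mathrm{BQP}_n$. That projection identity is the stronger form of the statement.

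An equally short alternative avoids the optimization step. For $x\in[0,1]^n$, let $\xi$ have independent Bernoulli$(x_i)$ coordinates. Then $\mathbb{E}[\xi_i]=x_i$, and $\mathbb{E}[\xi_i\xi_j]=x_ix_j$ for $i\neq j$ by independence. This writes the off-diagonal projection of $(x,xx^\T)$ explicitly as a convex combination of generators of $\mathrm{BQP}_n$.

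Your rounding argument reaches the same conclusion through the support function instead. It has the advantage of also showing directly that linear optimization over the box and over the binary points agree.
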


This result is at the core of our construction.

An alternative perspective for obtaining valid inequalities follows from the relaxed condition $X-xx^\T \succeq 0$:  using Schur's complement, this is equivalent to enforcing
\begin{equation}
 \left[ \begin{array}{cc}
1 & x^\T \\
x & X
\end{array} 
\right]\succeq 0,
\label{SDP-constraint}
\end{equation}
and thus, for any $(v_0,v)\in \mathbb{R}^{n+1}$, the following is a valid inequality for $\mbox{QPB}_n$
\begin{equation*}
\left\langle \begin{bmatrix} v_0 \\ v\end{bmatrix}  \begin{bmatrix} v_0 \\ v\end{bmatrix}^\T , \left[ \begin{array}{cc}
1 & x^\T \\
x & X
\end{array} 
\right] \right\rangle  \geq 0,
\end{equation*}
which can be equivalently rewritten as
\begin{equation}
\label{eq:eigencut}
\sum_{1\leq i < j \leq n} 2v_iv_j X_{i,j} + \sum_{i=1}^n v_i^2 X_{i,i} + \sum_{i=1}^n 2v_i v_0 x_i + v_0^2 \geq 0.
\end{equation}
We refer to this family as \emph{eigen-cuts}, and they are guaranteed to remove any relaxation solution $(\tilde{x},\tilde{X})$ that violates \eqref{SDP-constraint}, as $(v_0,v)$ can be chosen as an eigenvector of a negative eigenvalue in such a case.
This family of cutting planes has been considered before (see \cite{ramana1994algorithmic,gruber2000onsemidefinite,SheFra02,QuaBelMar12,bienstock2020outer,baltean2019scoring}), and the main drawback has been repeatedly acknowledged: if $(v_0, v)$ are eigenvectors of $\left[ \begin{array}{cc}
1 & \tilde{x}^\T \\
\tilde{x} & \tilde{X}
\end{array} 
\right]$, then the resulting cut will typically be \emph{dense}, which can cause numerical instability when they are added in a cutting plane fashion. 
Some work has addressed this issue and focused on producing sparse eigen-cuts \cite{fukuda2001exploiting,QuaBelMar12,dey2022cutting}. 

\subsection{Contributions}
In this work, we observe that using Proposition \ref{prop:burerletchford} along with \eqref{eq:eigencut} and rounding \`{a} la \emph{Chv\'{a}tal-Gomory} (CG)~\cite{Chvatal73_cg-cuts},
one can derive the following family of
valid inequalities for $\mbox{QPB}_n$:
\begin{equation}
\label{eq:eigenCGcut}
\sum_{1\leq i < j \leq n} \lceil 2v_iv_j \rceil X_{i,j} + \sum_{i=1}^n \lceil v_i^2 + 2v_i v_0 \rceil x_i + \lfloor v_0^2 \rfloor \geq 0.
\end{equation}

We call these \emph{Eigen-CG inequalities} or \emph{Eigen-CG cuts}.
This family of inequalities generalizes known, and highly expressive, inequalities: the Boros and Hammer (BH) inequalities \cite{boros1993cut}.
The BH inequalities can be obtained by noting that, for any $(w_0,w)\in \mathbb{Z}^{n+1}$, the following holds: 
\[(w^\T x + w_0 - 1)(w^\T x + w_0) \ge 0 \quad \forall x\in \{0,1\}^n.\]
Expanding this product and replacing $x_i x_j$ by $X_{i,j}$, we obtain the BH inequalities
\begin{equation}
\label{eq:BHineq}
\sum_{1\leq i < j \leq n}  2w_iw_j  X_{i,j} + \sum_{i=1}^n  w_i(w_i + 2 w_0 - 1) x_i + w_0(w_0 - 1)  \geq 0.
\end{equation}

 The fact that BH inequalities \eqref{eq:BHineq} are a special case of \eqref{eq:eigenCGcut} is known~\cite{letchford2012binary}, but here we provide a derivation from a new CG-based perspective. 
Through this new lens, we also define two more families of inequalities $\mathcal{F}_1$ and $\mathcal{F}_2$ that are between the BH and the Eigen-CG inequalities.
These families are defined based on the ``gap'' left by the fact that Eigen-CG inequalities can be defined using \emph{any} $(v_0, v)\in \mathbb{R}^{n+1}$, while BH inequalities require $(w_0,w)\in \mathbb{Z}^{n+1}$.
We show strict containment between them, i.e., loosely speaking 
\[ \mbox{BH}\subsetneq \mathcal{F}_1 \subsetneq \mathcal{F}_2 \subsetneq \mbox{Eigen-CG}.\] 
However, perhaps surprisingly, we show that the closures of $ \mbox{BH}$, $\mathcal{F}_1$, and $\mathcal{F}_2$ are equal.
This provides a new account for how general the BH inequalities are. We conjecture that the closure of $\mbox{Eigen-CG}$ is also equal to that of $ \mbox{BH}$, but we have not yet been able to formally prove this.

In terms of the limits of BH inequalities, it is known that there are facets of $\mbox{BQP}_n$ for $n=6$ that cannot be described as \eqref{eq:BHineq}~\cite{Grishukhin90_all-facets-of-cut7}.
Here, we show that $\mbox{Eigen-CG}$ also fails to produce all the facets of $\mbox{BQP}_n$ for $n=6$.
We do so by exhibiting multiple facets of $\mbox{BQP}_6$ with formal proofs that they cannot be recovered as Eigen-CG inequalities.
In contrast, for $n\leq 5$, Grishukhin~\cite{Grishukhin90_all-facets-of-cut7} proved
that all facets of $\mbox{BQP}_n$ are BH inequalities.

As a last theoretical contribution, we analyze the depth of \mbox{Eigen-CG} cuts: if a cut $\alpha^\T z \leq \beta$ separates a point $z^*$, its depth is defined as $(\alpha^\T z^* - \beta)/\|\alpha\|_2$ \cite{dey2018theoretical}.
In this regard, we show that the depth of Eigen-CG cuts degrades with the density of $v$, whenever the vector $(\tilde x,\tilde X)$ to cut satisfies the SDP constraint \eqref{SDP-constraint} and the McCormick inequalities \eqref{eq:McCormick}.
This provides a complementary point of view to what is observed in practice: that sparse inequalities, such as triangle inequalities, can significantly enhance an SDP relaxation of \eqref{QCQP-Ext}.

Finally, based on our theoretical findings, we devise a computational approach to generate sparse BH inequalities. We confirm, empirically, that the effectiveness of these cuts rapidly degrades with their density. We also demonstrate that cuts that are slightly denser than triangle inequalities can significantly improve the quality of the dual bounds.

\section{Related literature} \label{sec:literature}

There is a large body of literature on valid inequalities for QCQPs.
Those that are valid for BQP have received significant attention~\cite{letchford2022qubo},
including from studies of the \emph{cut polytope}~\cite{Barahonaj83_max-cut-problem_cut-polytope},
which is equivalent to BQP (via a linear transformation known as the \emph{covariance map}~\cite{DeSimone89_cut-polytope-covariance-map}).
Many of these belong to the family of BH inequalities~\cite{boros1993cut},
such as McCormick, 
triangle, clique, and hypermetric correlation~\cite{TylkinDeza60,Kelly60_hypermetric-spaces} inequalities.
BH inequalities map to (so-called) \emph{rounded psd} or \emph{$k$-gonal inequalities} for the cut polytope~\cite{deza1997geometry,AviUme03_stronger-LP-relaxations-max-cut}.

Though there are infinitely many BH inequalities, the closure is a polytope~\cite{letchford2012binary}.
Boros and Hammer~\cite{boros1993cut} developed polynomial-time solvable separation of certain subclasses of BH inequalities by solving a minimum weight spanning tree problem.
In these lines, \cite{yajima1998polyhedral} and \cite{letchford2014new} have also considered efficient separation routines over structured classes of BH-type inequalities. In \cite{BonGunLi18}, the authors study the computational impact of BQP-based cutting planes (some of which are special cases of BH inequalities), and in \cite{SheFra02}, the authors considered a special class of SDP-based cutting planes that produce inequalities related to BH.
The work of \cite{macambira2000edge,sorensen2004new} also developed separation routines for BQP-related inequalities in the context of weighted clique problems.
Other specialized approaches for generating BH inequalities can be found in the literature through the analogous procedures for the cut polytope; e.g. \cite{deza1997geometry}.
BH inequalities are generalized further by \emph{gap inequalities}~\cite{LauPol96_gap-inequalities-for-the-cut-polytope,GalKapLet11_gap-inequalities-for-nonconvex-miqp}.

CG rounding has also been considered in this context.
The previously mentioned work by \cite{letchford2014new}, for example, considers a $\{0,1/2\}$-CG procedure \cite{CapFis96_zero-half-cuts} to generate valid BH inequalities.
It is known that the set of CG cuts valid for BQP is dominated by odd cycle inequalities
(complete graph case given by Boros, Crama, and Hammer~\cite{boros1992chvatal}, sparse case proved by Bonami, G\"{u}nl\"{u}k, and Linderoth~\cite{BonGunLi18}).

Also related to our setting, the CG rounding procedure has been considered for integer conic programs~\cite{CezIye05_cuts-for-mixed-0-1-conic-programming} and SDPs~\cite{IyeCez01_cutting-planes-for-mixed-0-1-sdps,deMSot24_integrality-in-sdps,de2025chvatal}.
Eigen-CG inequalities can be viewed within the framework by de Meijer and Sotirov~\cite{de2025chvatal}, though we identify inequalities based solely on negative eigenvalues rather than via aggregations of the constraints.
The CG procedure has also been used for deriving cuts for polynomial optimization~\cite{DelDiG21_Chvatal-rank-in-binary-polynomial-optimization} and general convex compact sets~\cite{dadush2014chvatal}.

The facets of BQP$_6$ have been enumerated and classified~\cite{DezaLaurent88_facets-of-complete-cut-cone,DezLau92_facets-for-the-cut-cone,Grishukhin90_all-facets-of-cut7},
and it is known that $n=6$ is the smallest value for which BQP$_n$ has facets that are not hypermetric correlation inequalities.
This implies that the family of BH inequalities yields all facets of BQP$_n$ for $n\leq 5$.
For larger dimensions, the set of facets is known for the cut polytope from the complete graph on 8 vertices, and for certain other symmetric graphs~\cite{DezSik16_enumeration-of-facets-of-cut-polytopes}.
For more extensive coverage of related work on valid inequalities for BQP,  we refer the interested reader to the recent survey by Letchford~\cite{letchford2022qubo}.

\section{Eigen-CG inequalities and nested families}\label{sec:eigencg}

In \eqref{eq:eigenCGcut} and \eqref{eq:BHineq}, we have defined the Eigen-CG and BH inequalities. Let us first argue why the Eigen-CG cuts are valid.

\begin{lemma}
For every $(v_0 , v)\in \mathbb{R}^{n+1}$, inequality \eqref{eq:eigenCGcut} is valid for $\mbox{QPB}_n$.
\end{lemma}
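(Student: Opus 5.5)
The plan is to prove validity for the binary polytope $\mbox{BQP}_n$ first, and then transfer it to $\mbox{QPB}_n$ using Proposition~\ref{prop:burerletchford}. The inequality \eqref{eq:eigenCGcut} contains no diagonal variables $X_{i,i}$, so it has exactly the form required by that proposition. It therefore suffices to show that \eqref{eq:eigenCGcut} holds at every binary point $(x,X)$ with $x\in\{0,1\}^n$ and $X_{i,j}=x_ix_j$ for $i<j$, since $\mbox{BQP}_n$ is the convex hull of these points.

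Fix such a binary point. I would start from the eigen-cut \eqref{eq:eigencut}, which is valid for $\mbox{QPB}_n$ and hence holds at the point $(x,X')$ with $X'=xx^\T$; this point lies in $\mbox{QPB}_n$. At that point $X'_{i,i}=x_i^2=x_i$. Substituting this into \eqref{eq:eigencut} gives
\[\sum_{i<j} 2v_iv_j x_ix_j + \sum_i (v_i^2+2v_iv_0)x_i + v_0^2 \ge 0.\]
More directly, the left-hand side equals $(v_0+v^\T x)^2\ge 0$ once we use $x_i^2=x_i$. So the aggregated inequality, now written without diagonal terms, is valid on all binary points.

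Next comes the CG rounding step, which is standard. Every monomial $x_ix_j$ and every $x_i$ is nonnegative, so rounding each coefficient up can only increase the left-hand side:
\[\sum_{i<j}\lceil 2v_iv_j\rceil x_ix_j + \sum_i\lceil v_i^2+2v_iv_0\rceil x_i \;\ge\; -v_0^2.\]
The left-hand side is an integer because all its coefficients and variables are integers. An integer that is at least $-v_0^2$ is at least $\lceil -v_0^2\rceil=-\lfloor v_0^2\rfloor$. This gives \eqref{eq:eigenCGcut} at every binary point, hence on $\mbox{BQP}_n$ by convexity, and hence on $\mbox{QPB}_n$ by Proposition~\ref{prop:burerletchford}.

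The only delicate point is the order of operations. The rounding is valid only after passing to binary points, where integrality of the left-hand side holds and $X_{i,i}$ has been eliminated. Rounding directly over $\mbox{QPB}_n$ would not be justified. This is exactly the gap that Proposition~\ref{prop:burerletchford} bridges, and I expect no further obstacle.
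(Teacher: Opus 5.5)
Your proposal is correct and follows the same route as the paper: you restrict the eigen-cut to binary points (where $X_{i,i}=x_i$), apply CG rounding there, and transfer the result to $\mbox{QPB}_n$ via Proposition~\ref{prop:burerletchford}. Your explicit justification of the rounding step (nonnegativity of the monomials, integrality of the left-hand side, and $\lceil -v_0^2\rceil=-\lfloor v_0^2\rfloor$) simply spells out what the paper's proof leaves implicit.
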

\begin{proof}
We know the eigen-cuts \eqref{eq:eigencut} are valid for $\mbox{QPB}_n$.
If we further assume $(x,X)$ is binary, we obtain that
\[\sum_{1\leq i < j \leq n} 2v_iv_j X_{i,j} + \sum_{i=1}^n (v_i^2 + 2v_i v_0) x_i + v_0^2 \geq 0\]
is valid for $\mbox{BQP}_n$. Furthermore, we can round coefficients safely (first round up the non-constant terms, and then round down the constant term) and obtain that \eqref{eq:eigenCGcut} is valid for $\mbox{BQP}_n$.
The result then follows from Proposition \ref{prop:burerletchford}.\qed
\end{proof}

In order to compare Eigen-CG and BH inequalities, and define other 
families of inequalities, we use the following definition.

\begin{definition}
Given $(v_0 , v)\in \mathbb{R}^{n+1}$, we define $\mbox{E-CG}(v_0,v)$ as the coefficients of \eqref{eq:eigenCGcut} obtained using
$(v_0,v)$. 
This means that $\mbox{E-CG}(v_0,v) = (\alpha, \beta, \gamma) \in \mathbb{R}^{n+{n\choose 2}+1}$ where 
\begin{align*}
\alpha_i &= \lceil v_i^2 + 2v_i v_0 \rceil && \forall i \in [n]\\
\beta_{ij} &= \lceil 2v_iv_j \rceil && \forall i<j \textup{ and } i,j \in [n] \\
\gamma &= \lfloor v_0^2 \rfloor.
\end{align*}
\end{definition}
We study the following restricted families of Eigen-CG inequalities
\begin{align*}
\mathcal{F}_0&=\bigl\{\mbox{E-CG}(v_0,v):\ v\in\mathbb{Z}^n,\ v_0 + 1/2\in\mathbb{Z}\bigr\},\nonumber\\[2pt]
\mathcal{F}_1&=\bigl\{\mbox{E-CG}(v_0,v):\ v\in\mathbb{Z}^n,\ 2v_iv_0\in\mathbb{Z}\ \forall i\bigr\},\nonumber\\[2pt]
\mathcal{F}_2&=\bigl\{\mbox{E-CG}(v_0,v):\ v_i^2\in\mathbb{Z} \ \forall i,\ 2v_iv_j\in\mathbb{Z}\ \forall i\ne j,\ 2v_iv_0\in\mathbb{Z}\ \forall i\bigr\}.
\end{align*}
We note that the $\mathcal{F}_i$ families were constructed based on conditions that make the rounding step not needed 
for the non-constant terms in $\mbox{E-CG}(v_0,v)$. They clearly satisfy $\mathcal{F}_0\subseteq \mathcal{F}_1 \subseteq \mathcal{F}_2 \subseteq \mbox{E-CG}$.

\subsection{Direct comparison of restricted families}

In this subsection, we show that these families are indeed different. In the following subsection, we analyze their closures.

\begin{lemma}\label{lemma:strictcontainment}\hspace{0.1cm}
\begin{enumerate}
\item[(i)] There is a $(v_0,v)\in \mathbb{R}^{n+1}$ such that $\mbox{E-CG}(v_0,v) \in \mathcal{F}_1$, and that there is no scalar $\lambda > 0$ such that
$\lambda\cdot\mbox{E-CG}(v_0,v) \in \mathcal{F}_0$.
\item[(ii)] There is a $(v_0,v)\in \mathbb{R}^{n+1}$ such that $\mbox{E-CG}(v_0,v) \in \mathcal{F}_2$, and that there is no scalar $\lambda > 0$ such that
$\lambda\cdot\mbox{E-CG}(v_0,v) \in \mathcal{F}_1$.
\item[(iii)] There is a $(v_0,v)\in \mathbb{R}^{n+1}$ such that there is no scalar $\lambda > 0$ such that
$\lambda\cdot\mbox{E-CG}(v_0,v) \in \mathcal{F}_2$.
\end{enumerate}
\end{lemma}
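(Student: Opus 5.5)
We will prove each part by writing down one explicit small example and then arguing, via invariants of the coefficient vector, that no positive multiple of it lies in the smaller family. The key observation is how an element of $\mathcal{F}_0$ or $\mathcal{F}_1$ looks. For $v\in\mathbb{Z}^n$ we have $\beta_{ij}=2v_iv_j$ and $\alpha_i=v_i^2+2v_iv_0$. In $\mathcal{F}_0$, writing $v_0=w_0-\tfrac12$ with $w_0\in\mathbb{Z}$, the vector is exactly BH with $w=v$, $\gamma=w_0(w_0-1)$. Hence for $\mathcal{F}_0$:
\begin{itemize}
\item $\beta$ is a rank-one pattern $2v_iv_j$ with $v$ integral;
\item $\alpha_i\equiv v_i \pmod 2$;
\item $\gamma=\lfloor v_0^2\rfloor=\lfloor (w_0-\tfrac12)^2\rfloor$.
\end{itemize}
For a scaled copy $\lambda\cdot$E-CG we compare ratios. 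The ratios $\beta_{ij}\beta_{kl}/(\beta_{ik}\beta_{jl})$ are scale invariant, as are the ratios $\alpha_i^2/\beta_{ij}$ type quantities after fixing $\lambda$ from one coefficient. From these we pin down $\lambda$ and the candidate $(v_0',v')$ up to sign, and then derive a contradiction.

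\textbf{(i).} Take $n=1$ or $2$ with $v=(1,1)$ and $v_0=1$, so that $2v_iv_0=2\in\mathbb{Z}$. This gives $\beta_{12}=2$, $\alpha_i=3$, $\gamma=1$. Suppose $\lambda(3,3,2,1)=(\alpha',\beta',\gamma')$ comes from $v'\in\mathbb{Z}^2$ and $v_0'=w_0-\tfrac12$. Then $2v_1'v_2'=2\lambda$ and $v_i'^2+2v_i'v_0'=3\lambda$. Symmetry forces $v_1'^2+2v_1'v_0'=v_2'^2+2v_2'v_0'$, so $v_1'=v_2'$ or $v_1'+v_2'=-2v_0'$. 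In the first case $v_1'^2=\lambda$, hence $v_0'=v_1'$, which is not a half-integer. In the second case $v_1'+v_2'$ is integral while $-2v_0'$ is odd; so $v_1'+v_2'$ is odd, which forces $v_1'v_2'$ even, while $\alpha_1'=-v_1'v_2'=3\lambda$ and $v_1'v_2'=\lambda$ gives a sign contradiction. If needed, I would instead use a one-variable example with constant term parity, e.g.\ $v_0=0$ or $v_0$ integral, and exploit that $\gamma'=w_0(w_0-1)$ is even.

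\textbf{(ii).} Use $v_i=\sqrt2$ for all $i$ and $v_0=1/\sqrt2$ (with $n\ge2$). Then $v_i^2=2$, $2v_iv_j=4$, $2v_iv_0=2$, giving $\alpha_i=4$, $\beta_{ij}=4$, $\gamma=0$. A multiple in $\mathcal{F}_1$ requires $v'\in\mathbb{Z}^n$ with $2v_i'v_j'=4\lambda$ and $v_i'^2+2v_i'v_0'=4\lambda$. Thus $v_i'=t$ for all $i$ up to sign (for $n\ge3$ all signs equal), $\lambda=t^2/2$, and $2tv_0'=t^2$, i.e.\ $v_0'=t/2$. Then $\gamma'=\lfloor t^2/4\rfloor$ must equal $\lambda\cdot 0=0$, forcing $|t|\le1$. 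With $t=\pm1$ we get $\lambda=1/2$ and $v_0'=\pm\tfrac12$, so $\gamma'=0$, which is consistent. That is a bad sign, so I would alter the example to make $\gamma$ nonzero and incompatible, e.g.\ $v_0=3/\sqrt2$: then $\gamma=4$ while the $\mathcal{F}_1$ candidate yields $\gamma'=\lfloor 9t^2/4\rfloor$ with $\lambda=t^2/2$, and the ratio $\gamma'/\lambda$ versus the required value gives the contradiction. Tuning this choice so that the ratios fail is the main obstacle.

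\textbf{(iii).} Pick generic irrational data, e.g.\ $n=3$ and $v=(1,1,1)\cdot 0.6$ with $v_0=0$, so the rounding produces $\beta_{ij}=1$ and $\alpha_i=1$. Any multiple in $\mathcal{F}_2$ has $\alpha_i'=v_i'^2+2v_i'v_0'$ and $\beta_{ij}'=2v_i'v_j'$ exactly, giving $v_i'=s$ for all $i$, $2s^2=\lambda$, and $s^2+2sv_0'=\lambda$. Hence $v_0'=s/2$ and $\gamma'=\lfloor s^2/4\rfloor=\lfloor\lambda/8\rfloor$, which must equal $\lambda\gamma=0$. Since $s^2\in\mathbb{Z}_{>0}$, we need $s^2\le3$. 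I would then adjust the example (e.g.\ make $\gamma\ge1$ by taking $v_0$ slightly above $1$, or use mixed signs) so that these finitely many remaining cases are excluded by direct check.
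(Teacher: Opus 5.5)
Your part (i) is correct, and it is a simpler alternative to the paper's example $v=(2,-4)$, $v_0=3/4$. For $v=(1,1)$, $v_0=1$, the factorization $(\tilde v_1-\tilde v_2)(\tilde v_1+\tilde v_2+2\tilde v_0)=0$ leaves two cases. Either $\tilde v_0=\tilde v_1\in\mathbb{Z}$, which is not a half-integer, or $\tilde v_1^2+2\tilde v_1\tilde v_0=-\tilde v_1\tilde v_2=-\lambda\neq 3\lambda$. The parity remark is not needed.

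Parts (ii) and (iii), however, are not proved. You never exhibit a working example, and every example you write down has a positive multiple in $\mathcal{F}_0$.
\begin{itemize}
\item In (ii), $v=\sqrt2\,\mathbf{1}$, $v_0=3/\sqrt2$ gives $\alpha_i=8$, $\beta_{ij}=4$, $\gamma=4$. Half of this vector is exactly $\mbox{E-CG}(\tfrac32,\mathbf{1})\in\mathcal{F}_0\subseteq\mathcal{F}_1$, since $\lfloor 9/4\rfloor=2$. This is your own case $t=\pm1$, where $\lfloor 9t^2/4\rfloor=2t^2$ does hold.
\item No choice of $v_0$ can rescue $v=\sqrt2\,\mathbf{1}$. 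The condition $2v_iv_0\in\mathbb{Z}$ forces $v_0=k/(2\sqrt2)$ with $k\in\mathbb{Z}$. One checks that $2\cdot\mbox{E-CG}(v_0,v)=\mbox{E-CG}(k/2,2\cdot\mathbf{1})$ when $k$ is odd or divisible by $4$, and $\tfrac12\cdot\mbox{E-CG}(v_0,v)=\mbox{E-CG}(k/4,\mathbf{1})$ when $k\equiv 2\pmod 4$.
\item In (iii), $v=0.6\cdot\mathbf{1}$, $v_0=0$ gives $\alpha_i=\beta_{ij}=1$, $\gamma=0$, which equals $\tfrac12\cdot\mbox{E-CG}(\tfrac12,\mathbf{1})$. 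The case $s=1$ you planned to exclude ``by direct check'' is a genuine solution.
\item Your proposed repair for (iii), $v_0$ slightly above $1$ with the same $v$, gives $\alpha_i=2$, $\beta_{ij}=\gamma=1$. This is $\tfrac12\cdot\mbox{E-CG}(\tfrac32,\mathbf{1})$, so it fails too.
\end{itemize}

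The reason is structural. With all $v_i$ equal and $n\ge 3$, matching the non-constant coefficients forces $\tilde v=s\mathbf{1}$ and fixes $\lambda$ and $\tilde v_0$ as functions of $s$. Everything then rests on one floor identity for the constant term, and that identity is easily satisfied for small $s$. Your reduction does go through once the example makes that identity impossible:
\begin{itemize}
\item For (iii), take $v=0.3\cdot\mathbf{1}\in\mathbb{R}^3$, $v_0=1$, giving $\alpha_i=\beta_{ij}=\gamma=1$. Your reduction yields $\lambda=2s^2$, $\tilde v_0=s/2$ and $\lfloor s^2/4\rfloor=2s^2$, which is impossible for $s\neq 0$.
\item For (ii), take $v=\sqrt3\,\mathbf{1}\in\mathbb{R}^3$, $v_0=2/\sqrt3$, giving $\alpha_i=7$, $\beta_{ij}=6$, $\gamma=1$. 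The reduction yields $\lambda=s^2/3$, $\tilde v_0=2s/3$ and $\lfloor 4s^2/9\rfloor=s^2/3$. This forces $s=3t$ and then $4t^2=3t^2$.
\end{itemize}

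The paper instead uses asymmetric examples.
\begin{itemize}
\item For (iii), with $v=(1,-\sqrt2,\sqrt3)$, $v_0=0$, the rounded non-constant coefficients are already incompatible with the exact relations $\beta'_{ij}=2\tilde v_i\tilde v_j$ and $\alpha'_i=\tilde v_i^2+2\tilde v_i\tilde v_0$. The constant term is never used.
\item For (ii), with $v=(5\sqrt2,-10\sqrt2)$, $v_0=7\sqrt2/5$, those relations are \emph{not} contradictory. They force $\tilde v=(5t,-10t)$, $\tilde v_0=7t/5$, $\lambda=t^2/2$, and it is the constant-term equation $\lfloor 49t^2/25\rfloor=3t^2/2$ that cannot hold.
\end{itemize}
The displayed identity in the paper's proof of (ii) has a sign slip. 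It should read $(144\tilde v_1-78\tilde v_2)\lambda=\tilde v_1\tilde v_2(\tilde v_2-\tilde v_1)$, which gives $\tilde v_2=-2\tilde v_1$ rather than a sign contradiction.
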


\begin{proof}
\noindent (i) Let $v=(2,-4)$, $v_0=3/4$. Then, $2v_1v_0=3$ and $2v_2v_0=-6$, so $\mbox{E-CG}(v_0,v)\in\mathcal{F}_1$. 
Since $\lfloor v_0^2\rfloor=0$, the resulting Eigen-CG inequality is
\[
-16X_{12}+7x_1+10x_2\ge 0.
\]
Let us assume there is $\lambda > 0$ such that $\lambda\cdot\mbox{E-CG}(v_0,v) \in \mathcal{F}_0$. This means that for some $(\tilde{v},\tilde{v}_0)$, with $\tilde v \in \mathbb{Z}^n$ and $\tilde{v}_0 +1/2 \in \mathbb{Z}$, we have $-16\lambda = 2\tilde{v}_1\tilde{v}_2$, $7\lambda = \tilde{v}_1^2 + 2\tilde{v}_1\tilde{v}_0$, $10\lambda = \tilde{v}_2^2 + 2\tilde{v}_2\tilde{v}_0$, and $ \lfloor \tilde{v}_0^2 \rfloor = 0$.
The last condition implies that $ \tilde{v}_0 \in (-1,1)$, and thus $ \tilde{v}_0 = \pm 1/2$. Without loss of generality, we can assume $ \tilde{v}_0 =  1/2$. Note that these conditions imply
\[-8(\tilde{v}_1 + 1) = 7\tilde{v}_2, \qquad  -8(\tilde{v}_2 + 1) = 10 \tilde v_1.\] 
It can be easily checked that this linear system has a unique solution, which is not integral.\\

\noindent (ii) Let $v=(5\sqrt{2},-10\sqrt{2})$, $v_0= 7\sqrt{2}/5$. Then $v_1^2= 50$, $v_2^2= 200$,
$2v_1v_2=-200$, $2v_1v_0= 28$, and $2v_2v_0= -56$, so $\mbox{E-CG}(v_0,v)\in\mathcal{F}_2$. 
The Eigen-CG inequality in this case is
\[
-200X_{12}+78x_1+144x_2 + 3\ge 0.
\]
Let us assume there is $\lambda > 0$ such that $\lambda\cdot\mbox{E-CG}(v_0,v) \in \mathcal{F}_1$. This means that for some $(\tilde{v},\tilde{v}_0)$, with $\tilde v \in \mathbb{Z}^n$ and $2 \tilde{v}_i\tilde{v}_0  \in \mathbb{Z}$, we have $-200\lambda = 2\tilde{v}_1\tilde{v}_2$, $78\lambda = \tilde{v}_1^2 + 2\tilde{v}_1\tilde{v}_0$, and $144\lambda = \tilde{v}_2^2 + 2\tilde{v}_2\tilde{v}_0$.

Note that $\tilde{v}_1 \neq 0$ and $ \tilde{v}_2 \neq 0$. This implies 
\[\frac{78\lambda - \tilde{v}_1^2 }{2\tilde{v}_1} = \frac{144\lambda - \tilde{v}_2^2}{2\tilde{v}_2}  \Rightarrow (144 \tilde{v}_1 - 78 \tilde{v}_2) \lambda = \tilde{v}_1 \tilde{v}_2 (\tilde{v}_1 - \tilde{v}_2).\]

Using that $\lambda = -\tilde{v}_1 \tilde{v}_2 /100$ we get $\tilde{v}_2 = 244 \tilde{v}_1 / 178$. But note that this implies $\tilde{v}_1 \tilde{v}_2 \geq 0$, which contradicts $\lambda > 0$.\\

\noindent (iii) Let $v=(1,-\sqrt{2},\sqrt{3})$, $v_0=0$. The Eigen-CG inequality in this case is

\[-2X_{12} + 4X_{13} - 4X_{23} + x_1 + 2x_2 + 3x_3 \geq 0.\]
As before, we assume there is $\lambda > 0$ such that $\lambda\cdot\mbox{E-CG}(v_0,v) \in \mathcal{F}_2$. Thus, we have
$-2\lambda = 2\tilde{v}_1\tilde{v}_2$, $4\lambda = 2\tilde{v}_1\tilde{v}_3$, $-4\lambda = 2\tilde{v}_2\tilde{v}_3$, $\lambda = \tilde{v}_1^2 + 2\tilde{v}_1\tilde{v}_0$, $2\lambda = \tilde{v}_2^2 + 2\tilde{v}_2\tilde{v}_0$, and $3\lambda = \tilde{v}_3^2 + 2\tilde{v}_3\tilde{v}_0$.
Since $\lambda > 0$, no $\tilde{v}_i$, $i\geq 1$, can be 0. Replacing $\lambda$ throughout, we obtain the following linear equations
\begin{align*}
-\tilde{v}_2 = \tilde{v}_1 + 2\tilde{v}_0\quad&\quad \frac{1}{2}\tilde{v}_3 = \tilde{v}_1 + 2\tilde{v}_0\\
-\frac{1}{2}\tilde{v}_3 = \frac{1}{2}(\tilde{v}_2 + 2\tilde{v}_0)\quad&\quad  -\tilde{v}_1= \frac{1}{2}(\tilde{v}_2 + 2\tilde{v}_0)\\
-\frac{1}{2}\tilde{v}_2 = \frac{1}{3}(\tilde{v}_3 + 2\tilde{v}_0)\quad&\quad \frac{1}{2}\tilde{v}_1 = \frac{1}{3}(\tilde{v}_3 + 2\tilde{v}_0).
\end{align*}
It can be verified that the only solution for this system is $\tilde{v}_i = 0$ for all $i$, a contradiction.\qed
\end{proof}

\subsection{Closure comparison of restricted families}\label{sec:BHequivalence}

We begin by noting that the BH inequalities \eqref{eq:BHineq} correspond to the $\mathcal{F}_0$ family.
We remark that this is known from the cut-polytope literature \cite{letchford2012binary}, but for completeness, we state a self-contained proof here.

\begin{lemma}\label{lem:BHinCG}
The $\mathcal{F}_0$ family is exactly the family of Boros--Hammer inequalities.
\end{lemma}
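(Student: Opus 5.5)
The plan is to exhibit an explicit bijection between the parameters. Given $(w_0,w)\in\mathbb{Z}^{n+1}$ defining a BH inequality \eqref{eq:BHineq}, set $v=w$ and $v_0 = w_0 - 1/2$, so that $v\in\mathbb{Z}^n$ and $v_0+1/2\in\mathbb{Z}$. Conversely, any $(v_0,v)$ with $v\in\mathbb{Z}^n$ and $v_0+1/2\in\mathbb{Z}$ arises this way from $w=v$, $w_0=v_0+1/2\in\mathbb{Z}$. We then only need to check that $\mbox{E-CG}(v_0,v)$ coincides with the coefficient vector of \eqref{eq:BHineq} for this correspondence.

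The key steps are coefficientwise computations.

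\textbf{Off-diagonal terms.} Here $2v_iv_j = 2w_iw_j\in\mathbb{Z}$, so the ceiling is exact and $\beta_{ij}=2w_iw_j$.

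\textbf{Linear terms.} We have
\[
v_i^2+2v_iv_0 = w_i^2 + 2w_i(w_0-\tfrac12) = w_i(w_i+2w_0-1),
\]
which is an integer, so again no rounding occurs and $\alpha_i = w_i(w_i+2w_0-1)$.

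\textbf{Constant term.} Here $v_0^2 = (w_0-\tfrac12)^2 = w_0^2-w_0+\tfrac14 = w_0(w_0-1)+\tfrac14$. Since $w_0(w_0-1)\in\mathbb{Z}$, we get $\lfloor v_0^2\rfloor = w_0(w_0-1)$, matching the constant in \eqref{eq:BHineq}.

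Thus $\mbox{E-CG}(w_0-1/2,w)$ is exactly the BH inequality for $(w_0,w)$. Since the parameter map is a bijection between $\mathbb{Z}^{n+1}$ and the parameter set of $\mathcal{F}_0$, the two families coincide as sets of inequalities.

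The only mildly delicate step is the constant term, where rounding actually occurs. The point to emphasize is that the fractional part of $v_0^2$ is exactly $1/4$, so the floor removes precisely that $1/4$ and nothing more. I expect no real obstacle beyond stating the correspondence in both directions carefully.
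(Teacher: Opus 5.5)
Your proposal is correct and matches the paper's proof essentially verbatim. It uses the same substitution $v=w$, $v_0=w_0-1/2$ and the same coefficientwise check, with the only actual rounding in the constant term via $(w_0-1/2)^2=w_0(w_0-1)+1/4$, and handles the reverse direction by the same computation under the inverse parameter map.
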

\begin{proof}
Fix $(w_0,w)\in \mathbb{Z}^{n+1}$. We claim that $\mbox{E-CG}(w_0 - 1/2, w)\in \mathcal{F}_0$ yields the BH inequality defined by $(w_0,w)$.
The terms associated with $X_{ij}$ are directly obtained.
For each $i$, $w_i^2+2w_i(w_0 -1/2)=w_i^2+2w_iw_0-w_i$, which matches the coefficient of $x_i$ in \eqref{eq:BHineq}. 
Finally, 
\[ \left(w_0 - \frac{1}{2}\right )^2=w_0(w_0-1)+\frac{1}{4}  \Rightarrow \left\lfloor \left(w_0 - \frac{1}{2}\right )^2\right\rfloor=w_0(w_0-1). \]
The proof to show that any $\mbox{E-CG}(v_0, v)\in \mathcal{F}_0$ defines a BH inequality is the same.\qed
\end{proof}

In principle, and based on Lemma \ref{lemma:strictcontainment}, one could think that the additional freedom of $\mathcal{F}_1$ or
$\mathcal{F}_2$ could yield more expressiveness than BH in terms of their closures.
In what follows, we show that this is not the case by showing that \emph{every} inequality in $\mathcal{F}_2$ is implied by a conic 
combination of BH inequalities.\\

Before showing the result, we need the following technical condition.
\begin{proposition}\label{prop:irrational}
Let $v\in \mathbb{R}^n \setminus \{0\}$ and $v_0\in \mathbb{R}$, and suppose $v_i^2\in\mathbb{Z}$ for all $i$,
$2v_iv_j\in\mathbb{Z}$ for all $i\neq j$, and $2v_iv_0\in\mathbb{Z}$ for all $i$.
Then, there exists $p\in\mathbb{R}$ with $p^2\in\mathbb{Z}$, $2v_0p\in\mathbb{Z}$,
and $v_i/p\in\mathbb{Z}$ for all $i$.
\end{proposition}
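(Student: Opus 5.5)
The plan is to show that all nonzero entries of $v$ are integer multiples of one common square root $\sqrt{d}$, and then to take $p$ to be $\sqrt{d}$ times the gcd of those integer multipliers. Let $S=\{i : v_i\neq 0\}$, which is nonempty because $v\neq 0$. For each $i\in S$, $v_i^2=m_i$ is a positive integer. Write $m_i=a_i^2 d_i$ with $a_i\in\mathbb{Z}_{>0}$ and $d_i$ squarefree. Then $v_i=s_i a_i\sqrt{d_i}$ with $s_i\in\{\pm1\}$; absorb the sign into $a_i$, so that $v_i=a_i\sqrt{d_i}$ with $a_i\in\mathbb{Z}\setminus\{0\}$.

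The key step is to prove $d_i=d_j$ for all $i,j\in S$. From $2v_iv_j\in\mathbb{Z}$ we get $2a_ia_j\sqrt{d_id_j}\in\mathbb{Z}$. Since $a_ia_j\neq 0$, this forces $\sqrt{d_id_j}\in\mathbb{Q}$, so $d_id_j$ is a perfect square. Two squarefree positive integers whose product is a perfect square must be equal: every prime divides the product to an even power, hence it divides both or neither. Call the common value $d$. This is the only step requiring number-theoretic care, and I expect it to be the main (though mild) obstacle. Afterwards $v_i=a_i\sqrt d$ for $i\in S$ and $v_i=0$ otherwise.

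Now set $g=\gcd\{a_i : i\in S\}\geq 1$ and $p=g\sqrt d$. The first two requirements are immediate:
\begin{itemize}
\item $p^2=g^2d\in\mathbb{Z}$;
\item $v_i/p=a_i/g\in\mathbb{Z}$ for $i\in S$, and $v_i/p=0\in\mathbb{Z}$ for $i\notin S$.
\end{itemize}

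For the remaining requirement $2v_0p\in\mathbb{Z}$, use B\'ezout: there are integers $c_i$ with $g=\sum_{i\in S}c_ia_i$. Then
\[
2v_0p=\sum_{i\in S}c_i\,(2v_0a_i\sqrt d)=\sum_{i\in S}c_i\,(2v_iv_0)\in\mathbb{Z},
\]
since each $2v_iv_0\in\mathbb{Z}$ by hypothesis. This completes the construction.
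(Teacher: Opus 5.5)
Your proof is correct and follows the paper's argument: the same reduction to $v_i=a_i\sqrt{d}$ via squarefreeness, and the same choice $p=g\sqrt{d}$ with $g$ the gcd of the integer multipliers. Your B\'ezout step for $2v_0p\in\mathbb{Z}$ is a slightly cleaner replacement for the paper's argument that writes $2v_0p$ in lowest terms and shows the denominator is $1$. Restricting to the support $S$ also handles zero entries of $v$, which the paper's proof passes over.
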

\begin{proof}
Let $a_i:=v_i^2$. 
Using the prime factorization of $a_i$, we can write $a_i=s_i^2 d_i$ with $d_i\in \mathbb{Z}$ square-free\footnote{A square-free integer is an integer whose prime factorization has no exponent greater than or equal to 2. By convention, 1 is square-free.} and $s_i\in\mathbb{Z}$.
Furthermore, we assume the sign of $s_i$ is such that $v_i = s_i \sqrt{d_i}$.
Then, $2v_iv_j=2s_is_j\sqrt{d_id_j}\in\mathbb{Z}$ forces $d_id_j$ to be a square. Since the $d_i$ are square-free, it must hold that $d_i=d_j =: d$ for every $i,j$. Thus, $v_i = s_i \sqrt{d}$, with not all $s_i$ zero.

Let $g:=\gcd\{(s_i)_{i=1}^n\}>0$ and define $p:=g\sqrt{d}$.
Then, $p^2=g^2 d\in\mathbb{Z}$ and $v_i/p= s_i/g \in\mathbb{Z}$. 
We are only missing to show $2v_0p\in\mathbb{Z}$.
From $2v_iv_0\in\mathbb{Z}$ we have $2v_0s_i\sqrt{d}\in\mathbb{Z}$ for all $i$.
Let $q_i = s_i/g\in \mathbb{Z}$. By definition of $g$, it must hold that $\gcd\{(q_i)_{i=1}^n\}=1$. Then,
\[2v_0s_i\sqrt{d} = 2v_0q_i g\sqrt{d} = 2v_0q_i p \in\mathbb{Z}\qquad \forall i.\]
Let us write $2v_0 p = a/b$, with $a,b\in \mathbb{Z}$ coprimes and $b\geq 1$. Since $q_i\in \mathbb{Z}$, it must hold that $b$ divides all $g_i$. But since $\gcd\{(q_i)_{i=1}^n\}=1$, we conclude that $b=1$. This means $2v_0 p = a \in \mathbb{Z}$.\qed
\end{proof}

\begin{lemma}\label{thm:CGinBH}
Take $(v_0, v)$ such that $\mbox{E-CG}(v_0,v)\in \mathcal{F}_2$. Then,
$\mbox{E-CG}(v_0,v)$ is implied by a nonnegative combination of at most two BH inequalities.
\end{lemma}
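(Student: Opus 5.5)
The plan is to use Proposition~\ref{prop:irrational} to factor $v$ as an integer vector times a common scalar. After that factoring, the Eigen-CG inequality becomes a positive multiple of a ``fractional'' BH inequality, and that can be written as a convex combination of two consecutive genuine BH inequalities.

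\emph{Degenerate case $v=0$.} The inequality reads $\lfloor v_0^2\rfloor\ge 0$. It is implied by the trivial BH inequality $0\ge 0$, obtained from $w=0$, $w_0=0$. So assume $v\neq 0$.

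\emph{Step 1: factor $v$.} Proposition~\ref{prop:irrational} gives $p$ with $k:=p^2\in\mathbb{Z}$, $a:=2v_0p\in\mathbb{Z}$ and $q_i:=v_i/p\in\mathbb{Z}$. Since $v\ne0$, we have $p\neq 0$ and $k\ge 1$. Put $t:=v_0/p=a/(2k)$. Because $\mbox{E-CG}(v_0,v)\in\mathcal{F}_2$, no rounding occurs on the non-constant terms. The inequality is therefore
\[
k\Bigl(\sum_{i<j}2q_iq_jX_{i,j}+\sum_i (q_i^2+2tq_i)x_i\Bigr)+\lfloor v_0^2\rfloor\ge 0 .
\]

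\emph{Step 2: two BH inequalities with $w=q$.} Take the BH inequalities~\eqref{eq:BHineq} with $w=q$ and $w_0\in\{m,m+1\}$, where $m\in\mathbb{Z}$ is chosen so that $2m-1\le 2t\le 2m+1$. Their $X$-coefficients are $2q_iq_j$, matching the Eigen-CG inequality. Their $x_i$-coefficients are $q_i^2+(2m-1)q_i$ and $q_i^2+(2m+1)q_i$. Write $2t=\lambda(2m-1)+(1-\lambda)(2m+1)$ with $\lambda\in[0,1]$, and form $\lambda\,\mathrm{BH}(m)+(1-\lambda)\,\mathrm{BH}(m+1)$. This combination has exactly the $X$- and $x$-coefficients of the bracketed expression. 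Its constant is
\[
c=\lambda m(m-1)+(1-\lambda)(m+1)m=m^2+m(1-2\lambda).
\]
Using $1-2\lambda=2t-2m$, this simplifies to $c=2tm-m^2$.

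\emph{Step 3: compare constants.} This is the only real step, and I expect it to be the main point to check. Multiplying the combination by $k>0$ gives a valid inequality with the same non-constant part as the Eigen-CG inequality and constant $kc=am-km^2$. The Eigen-CG inequality is implied as soon as $kc\le\lfloor v_0^2\rfloor$.

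Two facts give this. First, $kc$ is an integer, because $a,k,m\in\mathbb{Z}$. Second, $kc=k\bigl(t^2-(t-m)^2\bigr)\le kt^2=v_0^2$. An integer that is at most $v_0^2$ is at most $\lfloor v_0^2\rfloor$, so $kc\le\lfloor v_0^2\rfloor$. Adding the nonnegative constant $\lfloor v_0^2\rfloor-kc$ to the scaled combination then yields the Eigen-CG inequality.

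Hence $\mbox{E-CG}(v_0,v)$ is implied by the nonnegative combination $k\lambda\,\mathrm{BH}(q,m)+k(1-\lambda)\,\mathrm{BH}(q,m+1)$, which uses at most two BH inequalities.
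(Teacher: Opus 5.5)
Your proof is correct and follows essentially the same route as the paper: you factor $v=p\,q$ via Proposition~\ref{prop:irrational}, then combine the two BH inequalities with $w=q$ and consecutive $w_0\in\{m,m+1\}$ bracketing $v_0/p$. Your multipliers $k\lambda$ and $k(1-\lambda)$ are exactly the paper's $\alpha^-$ and $\alpha^+$, and you bound the integer constant by $v_0^2-(v_0-mp)^2$ just as the paper does. Your separate treatment of $v=0$ is a small improvement, since Proposition~\ref{prop:irrational} assumes $v\neq 0$ and the paper's proof does not address that case.
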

\begin{proof}
By definition of $\mathcal{F}_2$, the vector $(v_0,v)$ satisfies the conditions of Proposition \ref{prop:irrational}.
Thus, we can take $p\in\mathbb{R}$ with $p^2\in\mathbb{Z}$, $2v_0p\in\mathbb{Z}$,
and $r_i := v_i/p\in\mathbb{Z}$ for all $i$. \\

Now, take $a\in \mathbb{Z}$ such that
\[a - \frac{1}{2} \leq \frac{v_0}{p} \leq a + \frac{1}{2}.\]
We consider the following two inequalities: the inequality defined by $\mbox{E-CG}( a-1/2,r)$, i.e.,
\begin{equation} \label{minusineq}
\sum_{i < j} 2 r_i r_j X_{ij} + \sum_i \left( r_i^2 + 2 r_i \left( a - \frac{1}{2} \right) \right) x_i + \left\lfloor \left( a - \frac{1}{2} \right)^2 \right\rfloor \geq 0
\end{equation}
and the inequality defined by $\mbox{E-CG}( a+1/2,r)$, i.e.,
\begin{equation} \label{plusineq}
\sum_{i < j} 2 r_i r_j X_{ij} + \sum_i \left( r_i^2 + 2 r_i \left( a + \frac{1}{2} \right) \right) x_i + \left\lfloor \left( a + \frac{1}{2} \right)^2 \right\rfloor \geq 0.
\end{equation}

Note that both of these Eigen-CG inequalities are actually BH inequalities, by virtue of Lemma \ref{lem:BHinCG}.
We claim that these two inequalities imply the inequality defined by $\mbox{E-CG}(v_0,v)$.

Define the multipliers
\begin{align*}
\alpha^- & = \frac{1}{2}p^2 + ap^2 - v_0 p  && \text{ for \eqref{minusineq}}\\
\alpha^+ & = \frac{1}{2}p^2 - ap^2 + v_0 p.&& \text{ for \eqref{plusineq}}
\end{align*}
By construction of $a$, we have $\alpha^-, \alpha+ \geq 0$. Moreover, $\alpha^+ + \alpha^- = p^2$, and thus the combination of the coefficients associated with $X_{ij}$ yields precisely
\[2(\alpha^+ + \alpha^-)r_ir_j = 2p^2 r_ir_j = 2v_iv_j.\]

As for the coefficients of $x_i$, the part associated with $r_i^2$ follows the same reasoning. For the rest, we analyze
\begin{align*}
\alpha^- \left( a - \frac{1}{2} \right) + \alpha^+ \left( a + \frac{1}{2} \right)  &= ap^2  + \frac{1}{2} \left( \alpha^+ - \alpha^- \right) \\
&= ap^2 + \frac{1}{2} \left( -2ap^2 + 2v_0p\right) \\
&= v_0 p,
\end{align*}
which is, again, what we want since $2r_iv_0 p = 2v_iv_0$. We are just missing the constant part.
Note that, since $a\in \mathbb{Z}$,
\begin{align*}
\left \lfloor \left(a - \frac{1}{2}\right)^2\right\rfloor &= \left \lfloor a^2 - a + \frac{1}{4}\right\rfloor = a^2 - a \\
\left \lfloor \left(a + \frac{1}{2}\right)^2\right\rfloor &= \left \lfloor a^2 + a + \frac{1}{4}\right\rfloor = a^2 + a,
\end{align*}
thus,
\begin{align*}
\alpha^- \left \lfloor \left(a - \frac{1}{2}\right)^2\right\rfloor + \alpha^+  \left \lfloor \left(a + \frac{1}{2}\right)^2\right\rfloor &= \alpha^- (a^2 - a) + \alpha^+( a^2 + a)\\
&= a^2p^2 + a (-2ap^2 + 2v_0p).
\end{align*}
The last expression is an integer since $a\in \mathbb{Z}$, $p^2\in \mathbb{Z}$, and $2v_0p \in \mathbb{Z}$. 
Therefore, if we can show
\begin{equation}\label{lastineq}
a^2p^2 + a (-2ap^2 + 2v_0p) \leq  v_0^2 ,
\end{equation}
we immediately have 
\[a^2p^2 + a (-2ap^2 + 2v_0p) \leq  \lfloor v_0^2 \rfloor, \]
and this would prove that the implied inequality is at least as strong as $\mbox{E-CG}(v_0,v)$. Let us show \eqref{lastineq} then. Rearranging terms, this becomes
\[0 \leq v_0^2 - 2v_0 a p + a^2p^2  = (v_0 - ap)^2,\]
which shows what we want.\qed
\end{proof}

We finalize this subsection with a conjecture.

\begin{conj}\label{conjecture}
For any $(v_0, v)\in \mathbb{R}^{n+1}$, the inequality
defined by  $\mbox{E-CG}(v_0,v)$ is implied by a nonnegative combination of BH inequalities.
\end{conj}

To date, every computational test we have devised suggests that this conjecture is true, but we have not been able to find a proof.

\subsection{Limitations of Eigen-CG cuts: missing facets of $\mbox{BQP}_6$}\label{sec:dim6}

Via exhaustive enumeration, we know that BH inequalities capture all facets of $\mbox{BQP}_5$; in particular, Eigen-CG inequalities describe $\mbox{BQP}_5$.
This is no longer true for $n=6$ \cite{Grishukhin90_all-facets-of-cut7,deza1997geometry}.
In this subsection, we analyze the facets of $\mbox{BQP}_6$ that are \emph{not} representable by any Eigen-CG inequality.

In total, $\text{BQP}_6$ has 116,764 facets \cite{deza1997geometry}.
Among these, our computational analysis shows that 3,676 inequalities can be represented as Eigen-CG inequalities (furthermore, BH), while the remaining 113,088 facets are \emph{not} of the Eigen-CG type.
Therefore, while Eigen-CG inequalities form a nontrivial and expressive family, they account for only a small fraction of the overall facet structure.
As mentioned in the introduction, the fact that BH is not sufficient for $\mbox{BQP}_6$ was known. But since we do not know if Eigen-CG has the same expressiveness as BH (Conjecture \ref{conjecture}), one could conceive Eigen-CG to describe facets of $\mbox{BQP}_6$ that BH cannot.

How do we \emph{know} that certain inequalities are not Eigen-CG? In what follows, we show examples of facets with different proof strategies that can formally certify that they are indeed not representable as Eigen-CG inequalities.
These different proof strategies can actually be
used to show, without relying on numerical methods, that the aforementioned 113,088 facets are not Eigen-CG.

Let us begin with a facet that can be handled with a simple argument.
\begin{proposition}\label{prop:dim6_sign}
The following facet\footnote{This facet was obtained via exhaustive enumeration.} of $\mbox{BQP}_6$
\begin{align}\label{eq:dim6facet}
&2 - 2x_1 - x_2 + x_3 - 2x_4 + 3x_5 - x_6 + 2X_{12} - X_{13} - X_{23} + 2X_{14} + X_{24}\nonumber\\
&\qquad - 2X_{15} - X_{25} - X_{45} + X_{16} + X_{36} + X_{46} - X_{56} \ge 0
\end{align}
cannot be represented as an Eigen-CG inequality.
\end{proposition}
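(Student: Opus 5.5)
The plan is a sign-consistency argument on the off-diagonal coefficients. Suppose, for contradiction, that the facet \eqref{eq:dim6facet} is represented by some $(v_0,v)\in\mathbb{R}^{7}$. Since an inequality is unchanged by positive scaling, this means that for some $\lambda>0$ the coefficient vector of \eqref{eq:dim6facet} equals $\lambda\cdot\mbox{E-CG}(v_0,v)$. The constant and the coefficients of $x_i$ play no role. We only use the coefficients $\beta_{ij}=\lceil 2v_iv_j\rceil$ of the $X_{i,j}$, and since $\lambda>0$ they have the same signs as the facet's coefficients of $X_{i,j}$.

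The key observation concerns these signs. If $\lceil 2v_iv_j\rceil>0$, then $2v_iv_j>0$. If $\lceil 2v_iv_j\rceil<0$, then $2v_iv_j\le -1<0$. So every strictly positive $\beta_{ij}$ forces $v_iv_j>0$, and every strictly negative one forces $v_iv_j<0$. In either case $v_i,v_j\neq 0$. A zero coefficient gives only the weak information $v_iv_j\in(-1/2,0]$, so we will avoid using zero coefficients.

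Next we look for a triangle of nonzero off-diagonal coefficients whose sign product is negative. Reading off \eqref{eq:dim6facet}, the triple $\{1,3,6\}$ works: $X_{13}$ has coefficient $-1$, while $X_{16}$ and $X_{36}$ both have coefficient $+1$. This forces $v_1v_3<0$, $v_1v_6>0$ and $v_3v_6>0$. Multiplying the last two gives $v_1v_3v_6^2>0$. Since $v_6\neq 0$, this means $v_1v_3>0$, a contradiction. Hence no $(v_0,v)$ and no $\lambda>0$ exist, and \eqref{eq:dim6facet} is not an Eigen-CG inequality.

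The main step is locating an odd-signed cycle among the nonzero off-diagonal coefficients. Any cycle with an odd number of negative coefficients would do, and the triangle $\{1,3,6\}$ already suffices. The rest is routine. The one subtlety is the strictness in the negative case: the bound $2v_iv_j\le -1$ is what rules out $v_iv_j=0$ and guarantees that every index involved has $v_i\neq 0$.
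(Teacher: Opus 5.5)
Your proof is correct and uses essentially the same sign-pattern argument as the paper. The only difference is the triangle. The paper uses $\{1,2,6\}$, where the zero coefficient on $X_{26}$ forces $v_2v_6\le 0$, which contradicts $v_1v_2>0$ and $v_1v_6>0$. You instead use the triangle $\{1,3,6\}$, whose three coefficients are all nonzero and have an odd number of negative signs; this avoids zero coefficients and works equally well.
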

\begin{proof}
Let us suppose there exists $(v_0,v)\in \mathbb{R}^{n+1}$ and $\lambda > 0$ such that $\lambda\cdot \mbox{E-CG}(v_0,v)$ corresponds to \eqref{eq:dim6facet}.
The zero coefficient on $X_{26}$ in \eqref{eq:dim6facet} indicates $v_2v_6\leq0$. The terms $2X_{12}$ and $X_{16}$ imply that $v_1v_2 > 0$ and $v_1 v_6 > 0$. This implies that $\mbox{sign}(v_2) = \mbox{sign}(v_1) = \mbox{sign}(v_6)$.
Thus, to satisfy $v_2v_6\leq0$, either $v_2 = 0$ or $v_6 = 0$. This leads to a contradiction.\qed
\end{proof}

Note that since the inequality of Proposition~\ref{prop:dim6_sign} is a facet, it cannot be \emph{implied} by other valid inequalities.\\

We illustrate a different proof strategy for non-Eigen-CG inequalities in the following proposition.

\begin{proposition}
\label{example_dim6facet_lambda}
The following facet\footnote{This facet was obtained via exhaustive enumeration.} of $\mbox{BQP}_6$
\begin{align}\label{eq:dim6facet_lambda}
5 &- 4x_1 - 4x_2 + 4x_3 - 5x_4 + 4x_5 - 3x_6 \nonumber\\
& + 3X_{12} - 2X_{13} - 2X_{23} + 5X_{14} + 5X_{24} - 3X_{34} -  2X_{15} - 2X_{25} \\
&  + 1X_{35} - 3X_{45} + 2X_{16} + 2X_{26} - 1X_{36} + 3X_{46} - 1X_{56} \ge 0\nonumber
\end{align}
cannot be represented as an Eigen-CG inequality.
\end{proposition}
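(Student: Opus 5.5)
The plan is to argue by contradiction, using the scaling factor $\lambda$ and the integrality created by the ceilings. Suppose there exist $(v_0,v)\in\mathbb{R}^{7}$ and $\lambda>0$ such that $\lambda\cdot\mbox{E-CG}(v_0,v)$ equals the coefficient vector of \eqref{eq:dim6facet_lambda}.

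\textbf{Step 1: why the sign argument fails, and what replaces it.} Every off-diagonal coefficient of \eqref{eq:dim6facet_lambda} is nonzero. Their signs are consistent with the sign vector $(+,+,-,+,-,+)$ for $v$; for instance $v_1v_2>0$, $v_1v_3<0$ and $v_2v_3<0$ agree. So the argument of Proposition~\ref{prop:dim6_sign} does not apply, and we need magnitudes instead.

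Each $\lceil 2v_iv_j\rceil$ is an integer equal to $c_{ij}/\lambda$, where $c_{ij}$ is the facet coefficient of $X_{ij}$. Since coefficients $\pm1$ occur (e.g.\ on $X_{35}$), $1/\lambda=:k$ must be a positive integer. Hence
\[
kc_{ij}-1<2v_iv_j\le kc_{ij}\ \ (i<j),\qquad ka_i-1<v_i^2+2v_iv_0\le ka_i,\qquad 5k\le v_0^2<5k+1,
\]
where $a_i$ is the facet coefficient of $x_i$.

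\textbf{Step 2: a rank-one consistency identity that bounds $k$.} The off-diagonal part is the rank-one matrix $vv^\T$, so for distinct $i,j,k',l$ we have
\[
(2v_iv_j)(2v_{k'}v_l)=(2v_iv_{k'})(2v_jv_l)=(2v_iv_l)(2v_jv_{k'}).
\]
Take the quadruple $\{1,2,3,4\}$. The three products are approximately
\[
3k\cdot(-3k)=-9k^2,\qquad (-2k)(5k)=-10k^2,\qquad 5k\cdot(-2k)=-10k^2.
\]
Each true product differs from its approximation by at most about $|c_{ij}|k+|c_{k'l}|k+1$, which is $O(k)$. The approximations differ by $k^2$. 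Making the constants explicit (each error is below $8k+1$) forces $k$ to be small, roughly $k\le 17$. Other quadruples, for example $\{1,2,4,6\}$ or $\{1,3,4,5\}$, should give further mismatches and tighten this bound.

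\textbf{Step 3: rule out the remaining finitely many $k$.} This is the main obstacle. For each remaining $k$, the interval system in the display above must be shown infeasible. I would do this as follows.

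1. Use the sign pattern to set $u_i=|v_i|$. Then $\log u_i+\log u_j$ lies in a known interval for every pair $i<j$.
2. Solve for each $u_i^2$ through triples: $u_i^2=(u_iu_j)(u_iu_l)/(u_ju_l)$. This gives interval bounds on every $v_i^2$.
3. With $|v_0|\in[\sqrt{5k},\sqrt{5k+1})$, check the diagonal conditions $v_i^2+2v_iv_0\in(ka_i-1,ka_i]$, handling the two signs of $v_0$ separately.

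For example, $x_3$ and $x_5$ have coefficient $4$ with $v_3,v_5<0$, while $x_1,x_2$ have coefficient $-4$ with $v_1,v_2>0$. This should force $v_0<0$ together with a specific size, which should then clash with the bounds on $v_4$ and $v_6$ coming from $a_4=-5$ and $a_6=-3$.

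If the interval reasoning is not tight enough by hand for small $k$, I would finish each case with an exact feasibility check in place of further estimates. For $k=1$, say, the ceilings pin each $2v_iv_j$ to an interval of length $1$, so the products can be bounded rigorously with rational arithmetic.
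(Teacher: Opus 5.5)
Your key tool, the rank-one identity $(2v_iv_j)(2v_kv_l)=(2v_iv_k)(2v_jv_l)$, is the right one and is what the paper uses. The gap is that you apply it to a quadruple that cannot finish the job, and the remaining work (your Step 3) is never carried out. In $\{1,2,3,4\}$ every pairing mixes a positive and a negative coefficient ($3,-3$; $-2,5$; $5,-2$), so the ceiling slack can push each product in either direction. The identity therefore only bounds $k$: comparing $(9k^2-3k,\,9k^2+3k)$ with $(10k^2-2k,\,10k^2+5k)$ gives $k\le 4$. For $k=1$ the admissible ranges of $|4v_1v_2v_3v_4|$ are $(6,12)$, $(8,15)$ and $(8,15)$, which overlap, so this quadruple genuinely does not exclude small $k$. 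You then hand the surviving cases to an interval-propagation scheme whose outcome is only asserted (``should force $v_0<0$ \dots\ which should then clash''), or to an unspecified exact feasibility check. As written, that does not prove the statement.

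The missing idea is to pick a quadruple whose pairings are sign-homogeneous, so that the one-sided nature of the ceiling settles every $k$ at once. Since $\lceil t\rceil=kc$ means $kc-1<t\le kc$, a positive coefficient caps $|2v_iv_j|$ from above and a negative coefficient bounds it from below. Take $\{1,2,3,5\}$.
\begin{itemize}
\item The coefficients of $X_{12}$ and $X_{35}$ are $3$ and $1$. Since $k\ge 1$, this gives $2v_1v_2>3k-1>0$ and $2v_3v_5>k-1\ge 0$, hence $0<(2v_1v_2)(2v_3v_5)\le 3k^2$.
\item The coefficients of $X_{13}$ and $X_{25}$ are both $-2$. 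This gives $2v_1v_3\le -2k$ and $2v_2v_5\le -2k$, hence $(2v_1v_3)(2v_2v_5)\ge 4k^2$.
\end{itemize}
Both products equal $4v_1v_2v_3v_5$, which is impossible for every positive integer $k$. This needs no case analysis and no use of the linear or constant coefficients; it is exactly the paper's argument, and it makes your Steps 2--3 unnecessary.
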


Before presenting the proof, we remark that the facet \eqref{eq:dim6facet_lambda} passes the sign pattern ``test'' of Proposition \ref{prop:dim6_sign}. This indicates that not all non-Eigen-CG facets can be certified simply using sign patterns. Thus, we require a different proof.

\begin{proof}
Suppose, by contradiction, that \eqref{eq:dim6facet_lambda} can be represented as a (possibly rescaled) Eigen-CG inequality.
Then there exist $(v_0,v)\in\mathbb{R}^{n+1}$ and an integer $\lambda>0$ such that $(1/\lambda)\cdot \mbox{E-CG}(v_0,v)$ yields the coefficients of \eqref{eq:dim6facet_lambda}.

In particular, the coefficients $3X_{12}$, $X_{35}$, $-2X_{13}$, and $-2X_{25}$ imply that
\begin{align*}
\ceil{2v_1 v_2}=3\lambda,\, \ceil{2v_3 v_5}=\lambda,\, \ceil{2v_1 v_3}=-2\lambda,\, \ceil{2v_2 v_5}=-2\lambda.
\end{align*}
 Let $\bar{\beta}_{ij} := 2v_i v_j$. We should have 
\begin{align*}
  3\lambda-1<&\,\bar{\beta}_{12}\leq 3\lambda,\\
  \lambda-1<&\,\bar{\beta}_{35}\leq \lambda,\\ 
  -2\lambda-1<&\,\bar{\beta}_{13}\leq -2\lambda, \\
  -2\lambda-1<&\,\bar{\beta}_{25}\leq -2\lambda, 
\end{align*}
which implies that $\bar\beta_{12}\bar\beta_{35} \leq 3\lambda^2$, $\bar\beta_{13}\bar\beta_{25} \geq 4\lambda^2$.
This contradicts that
$\bar\beta_{12}\bar\beta_{35}=\bar\beta_{13}\bar\beta_{25}$. Hence, the facet \eqref{eq:dim6facet_lambda} cannot be represented as an Eigen-CG inequality.\qed
\end{proof}

As opposed to the sign pattern argument of Proposition~\ref{prop:dim6_sign}, Proposition~\ref{example_dim6facet_lambda} exploits incompatibilities arising from $2\times 2$ multiplicative relationships among bilinear coefficients. 
These two arguments combined certify many, but not all of the non-Eigen-CG facets of $\mathrm{BQP}_6$.
In particular, if we automate these two arguments, then among the 113,088 facets that are not of the Eigen-CG type, all but 37,338 can be certified to be non-Eigen-CG using either the sign-pattern argument or the $2\times 2$ argument.

Certifying the remaining non-Eigen-CG facets of $\mathrm{BQP}_6$ requires analyzing a larger and more intricate system of relations among the coefficients.
We did so for each of the remaining cases (after reducing cases via symmetries among the facets).
We omit each case in detail for the sake of brevity, but we illustrate one of these remaining cases to give the reader an idea of how a proof can be obtained.

\begin{proposition}\label{prop:dim6_general}
The following facet\footnote{This facet was obtained via exhaustive enumeration.} of $\mbox{BQP}_6$
\begin{align}\label{eq:dim6facet_general}
&1- x_1+ 2x_3+ x_5 +2X_{12}-X_{13}-3X_{23}+X_{14}+2X_{24}-2X_{34}\nonumber\\
&\quad -X_{15}-2X_{25}+2X_{35}-X_{45} +X_{16}+2X_{26}-2X_{36}+X_{46}-X_{56} \ge 0
\end{align}
cannot be represented as an Eigen-CG inequality.
\end{proposition}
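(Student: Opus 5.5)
The plan is a sign normalization followed by a magnitude squeeze that uses the diagonal ($x_i$) coefficients. Suppose, for contradiction, that there are $(v_0,v)\in\mathbb{R}^{7}$ and $\lambda>0$ with $\mbox{E-CG}(v_0,v)=\lambda\cdot(\text{coefficients of }\eqref{eq:dim6facet_general})$. The constant term gives $\lfloor v_0^2\rfloor=\lambda$. Hence $\lambda$ is a positive integer, so $\lambda\ge 1$, and $v_0^2\ge\lambda$.

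\emph{Step 1: signs.} Each coefficient is a ceiling, so $\lceil 2v_iv_j\rceil=c\lambda$ means $c\lambda-1<2v_iv_j\le c\lambda$, and similarly for the $x_i$ terms. The positive coefficients of $X_{12},X_{14},X_{16},X_{24},X_{26},X_{46}$ force $2v_iv_j>c\lambda-1\ge 2\lambda-1>0$ or $>\lambda-1\ge 0$. Therefore $v_1,v_2,v_4,v_6$ are nonzero and share a common sign. Replacing $(v_0,v)$ by $-(v_0,v)$ leaves every coefficient of \eqref{eq:eigenCGcut} unchanged, so we may assume this sign is positive. The negative coefficients of $X_{13}$ and $X_{15}$ give $2v_1v_3\le-\lambda<0$ and $2v_1v_5\le -\lambda<0$, hence $v_3<0$ and $v_5<0$.

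\emph{Step 2: sign of $v_0$.} The coefficient of $x_2$ is $0$, so $v_2^2+2v_2v_0\le 0$. Since $v_2>0$, this yields $v_0\le -v_2/2<0$. Write $u:=-v_0>0$, so that $u\ge\sqrt{\lambda}$. Because $v_3,v_5<0$, we have $2v_3v_0=2|v_3|u>0$ and $2v_5v_0=2|v_5|u>0$.

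\emph{Step 3: the squeeze.} The coefficients of $x_3$ and $x_5$ give $v_3^2+2|v_3|u\le 2\lambda$ and $v_5^2+2|v_5|u\le\lambda$. Since $v_3^2>0$ and $v_5^2>0$, these imply the strict bounds $|v_3|<\lambda/u$ and $|v_5|<\lambda/(2u)$. Hence
\[
2v_3v_5=2|v_3||v_5|<\frac{\lambda^2}{u^2}\le\lambda ,
\]
using $u^2\ge\lambda$. On the other hand, the coefficient $2X_{35}$ requires $2v_3v_5>2\lambda-1\ge\lambda$ because $\lambda\ge1$. This is a contradiction.

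I expect the main obstacle to be choosing which coefficients to combine. The pure bilinear tests (sign pattern, $2\times2$ products) presumably pass for this facet, which is why it is singled out. The key is therefore to bring in the constant term, giving a lower bound on $|v_0|$, together with the zero $x_2$ coefficient, which fixes the sign of $v_0$. These turn the $x_3$ and $x_5$ coefficients into upper bounds on $|v_3|$ and $|v_5|$ that are incompatible with the large positive $X_{35}$ coefficient. If the strict inequalities turn out to be delicate at the boundary case $\lambda=1$, I would fall back on the alternative bound $2v_3v_5>2\lambda-1$ combined with a case split on $\lambda$.
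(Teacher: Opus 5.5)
Your proof is correct and complete, and it takes a different route from the paper's.

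The paper's main tool is the matching identity $\bar\beta_{12}\bar\beta_{34}\bar\beta_{56}=\bar\beta_{13}\bar\beta_{25}\bar\beta_{46}$, with $\bar\beta_{ij}=2v_iv_j$. Interval arithmetic on the ceiling constraints shows that the two products have disjoint ranges. This only works for $\lambda\ge 3$, so the cases $\lambda\in\{1,2\}$ are treated separately. For $\lambda=1$, the paper normalizes $v_0>0$ and splits on the sign of $v_5$ using the lower bound from the $x_5$ coefficient. It then plays the $X_{25}$ coefficient against the zero $x_2$ coefficient to get $|v_2|>2v_0\ge|v_2|$. The case $\lambda=2$ is only asserted to be similar.

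You instead proceed as follows:
\begin{itemize}
\item fix all signs from the bilinear coefficients;
\item use the zero $x_2$ coefficient to put $v_0$ on the opposite side of $v_2$;
\item use the constant term to get $v_0^2\ge\lambda$;
\item let the small $x_3$ and $x_5$ coefficients force $2v_3v_5<\lambda^2/v_0^2\le\lambda\le 2\lambda-1$, which contradicts the $X_{35}$ coefficient.
\end{itemize}

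What this buys is a single argument valid for every integer $\lambda\ge 1$, with no case split and nothing left to the reader. Your worry about $\lambda=1$ is unfounded and no fallback is needed. Since $v_3,v_5\neq 0$, both bounds are already strict, and $\lambda\le 2\lambda-1<2v_3v_5<\lambda$ is a contradiction.

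Incidentally, the paper's $\lambda=1$ argument also goes through for general $\lambda$ after replacing $1$ by $\lambda$: it gives $|v_2|\ge v_0+\sqrt{v_0^2+\lambda}>2v_0$. That would likewise make its product-identity part unnecessary. It is the closest analogue to your squeeze, differing in that it uses $X_{25}$ with $x_2,x_5$ and needs no lower bound on $|v_0|$.

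What the paper's product-identity route buys is generality of method. It uses only bilinear coefficients and is the kind of check that can be automated across the remaining non-Eigen-CG facets. Your squeeze, by contrast, relies on this facet's particular small diagonal coefficients and constant term.
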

\begin{proof}
Suppose, by contradiction, that \eqref{eq:dim6facet_general} can be represented as a (possibly rescaled) Eigen-CG inequality.
Then there exist $(v_0,v)\in\mathbb{R}^{n+1}$ and an integer $\lambda>0$ such that $(1/\lambda)\cdot \mbox{E-CG}(v_0,v)$ yields the coefficients of \eqref{eq:dim6facet_general}.

We focus on the following six coefficients:
\[
c_{12} = 2,\,
c_{34} = -2,\,
c_{56} = -1,\,
c_{13} = -1,\,
c_{25} = -2,\,
c_{46} = 1.
\]
Let $\bar{\beta}_{ij} := 2 v_i v_j$. Then, similarly to the proof of Proposition \ref{example_dim6facet_lambda}, $\ceil{\bar{\beta}_{ij}} = \lambda c_{ij}$, which implies
\[
\bar{\beta}_{ij} \in \big(\lambda c_{ij} - 1, \lambda c_{ij}\big]
\qquad\text{for each pair } (i,j).
\]
In addition, the following must hold
\begin{align}
\bar{\beta}_{12}\,\bar{\beta}_{34}\,\bar{\beta}_{56}=
\bar{\beta}_{13}\,\bar{\beta}_{25}\,\bar{\beta}_{46}=8 v_1 v_2 v_3 v_4 v_5 v_6.
\label{eq:beta-matching-identity}
\end{align}
and since there are coefficients $c_{ij}$ such that $|c_{ij}| = 1$ (e.g. $\{ij\} = \{46\}$), we must have $\lambda \in \mathbb{Z}$; in particular, $\lambda \geq 1$.

Let us define $P_1 := \bar{\beta}_{12}\,\bar{\beta}_{34}\,\bar{\beta}_{56}$ and $P_2 := \bar{\beta}_{13}\,\bar{\beta}_{25}\,\bar{\beta}_{46}$.
We show that for every integer $\lambda \ge 1$, the feasible ranges of $P_1$ and $P_2$ implied by the ceiling relations are disjoint, and therefore \eqref{eq:beta-matching-identity} cannot hold. We split the proof into two parts. 

\noindent\textbf{Part 1.} From $c_{12}=2$, $c_{34}=-2$, and $c_{56}=-1$, we obtain
\[
\begin{aligned}
\bar{\beta}_{12} \in (2\lambda - 1,\; 2\lambda],\,
\bar{\beta}_{34} \in (-2\lambda - 1,\; -2\lambda],\,
\bar{\beta}_{56} \in (-\lambda - 1,\; -\lambda].
\end{aligned}
\]
Thus, since $\lambda \geq 1$, we have
\begin{align*}
P_1 \in \big(P_1^{\min},\,P_1^{\max}\big]
:=
\big((2\lambda-1)\,2\lambda^2,\;2\lambda(2\lambda+1)(\lambda+1)\big].
\end{align*}
Similarly, from $c_{13}=-1$, $c_{25}=-2$, and $c_{46}=1$, we have
\[
\begin{aligned}
\bar{\beta}_{13} \in (-\lambda - 1,\; -\lambda],\,
\bar{\beta}_{25} \in (-2\lambda - 1,\; -2\lambda],\,
\bar{\beta}_{46} \in (\lambda - 1,\; \lambda].
\end{aligned}
\]
from where we obtain
\begin{align*}
P_2 \in \big[P_2^{\min},\,P_2^{\max}\big]
:=
\big[2\lambda^3 - 2\lambda^2,\;\lambda(\lambda+1)(2\lambda+1)\big].
\end{align*}
For \eqref{eq:beta-matching-identity} to hold, it is necessary that $\big(P_1^{\min},P_1^{\max}\big] \cap \big[P_2^{\min},P_2^{\max}\big] \neq \varnothing$.
A sufficient condition for impossibility is $P_1^{\min} > P_2^{\max}$. Indeed, we have
\begin{align*}
P_1^{\min} - P_2^{\max}
= (2\lambda-1)\,2\lambda^2 - \lambda(\lambda+1)(2\lambda+1)= \lambda\big(2\lambda^2 - 5\lambda - 1\big), 
\end{align*}
which is strictly positive for all integers $\lambda \ge 3$.
Therefore, the facet \eqref{eq:dim6facet_general} cannot be represented as a multiple of an Eigen-CG inequality for $\lambda \ge 3$.

\noindent\textbf{Part 2.} 
It remains to consider the cases $\lambda\in\{1,2\}$.
Let us first consider $\lambda=1$.
From $\lfloor v_0^2\rfloor=1$, we have $1 \le v_0^2 < 2$. Without loss of generality, we assume $v_0>0$.
Since the term $x_5$ has coefficient 1, we have $v_5^2+2v_5v_0 \in (0,1]$. 
The lower bound of 0 yields two cases:
\[
\text{(i)}\quad 0< v_5, \qquad
\text{(ii)}\quad v_5 < -2v_0.\]
Consider case (ii). From the coefficient $-2X_{25}$, we have $v_2 v_5 <0$. Since $v_5<0$ in this case, it follows that $v_2>0$. However, from the coefficient $0x_2$, we have  $v_2^2+2v_2v_0 \in (-1,0]$. However, $v_2^2+2v_2v_0$ is strictly positive since when $v_2>0, v_0>0$. A contradiction.
Hence, case (ii) is impossible.

Consider now case (i): $0 < v_5$. From $-2X_{25}$, we have $\bar{\beta}_{25}=2v_5v_2 \in (-3,-2]$.
This implies $v_2<0$ and thus $2v_5|v_2| \ge 2$. Additionally, recall that $v_5^2 + 2v_5v_0 \leq 1$, which implies $v_5 \le -v_0+\sqrt{v_0^2+1}$ and thus
\begin{align*}
|v_2|\geq \frac{1}{v_5} \geq 
\frac{1}{-v_0+\sqrt{v_0^2+1}}
=v_0+\sqrt{v_0^2+1}> 2v_0.
\end{align*}

From the coefficient $0x_2$, we have $v_2^2+2v_2v_0=v_2(v_2+2v_0) \in (-1,0]$. But since $v_2<0$, we must have $v_2+2v_0\ge 0$, i.e., $|v_2|=-v_2 \le 2v_0$, which contradicts the previously derived condition $|v_2|> 2v_0$.

A similar argument applies for $\lambda=2$.

This shows that the facet \eqref{eq:dim6facet_general} cannot be represented as an Eigen-CG inequality. \qed
\end{proof}

\subsection{Limitations of Eigen-CG cuts: dense cuts are weak over SDP}\label{sec:depth}

In this section, we show formally that dense Eigen-CG cuts are weak in the presence of an SDP constraint.
This means that, if we have a solution $(\hat{x}, \hat{X})$ that satisfies the McCormick and SDP constraints, any dense Eigen-CG will have a limited depth.

\begin{proposition}\label{proposition_depth}
Consider $(v_0,v)\in \mathbb{R}^{n+1}$. Let $(\hat{x}, \hat{X}) \not\in \mbox{QPB}_n$ be such that it satisfies the SDP constraint \eqref{SDP-constraint} and the McCormick inequalities \eqref{eq:McCormick}. Then, the maximum violation (depth of cut) of $(\hat{x}, \hat{X})$ for the inequality $\mbox{E-CG}(v_0,v)$ is
$$\frac{2}{\sqrt{\|v\|_0 (\|v\|_0 - 2)}},$$
where $\|v\|_0$ is the number of non-zero entries of $v$.
\end{proposition}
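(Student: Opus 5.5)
The plan is to show that the stated quantity is an \emph{upper bound} on the depth. We bound the numerator (the violation) by $1$ and the denominator (the Euclidean norm of the coefficient vector of $\mbox{E-CG}(v_0,v)$) from below by $\tfrac12\sqrt{\|v\|_0(\|v\|_0-2)}$. Write $k:=\|v\|_0$ and $(\alpha,\beta,\gamma)=\mbox{E-CG}(v_0,v)$.

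\textbf{Step 1 (violation is less than $1$).} Compare the Eigen-CG left-hand side at $(\hat x,\hat X)$ with the eigen-cut \eqref{eq:eigencut} at the same point. The eigen-cut is nonnegative there because $(\hat x,\hat X)$ satisfies \eqref{SDP-constraint}. Write the difference as
\[
\sum_{i<j}\bigl(\lceil 2v_iv_j\rceil-2v_iv_j\bigr)\hat X_{ij}+\sum_i\bigl(\lceil v_i^2+2v_iv_0\rceil-v_i^2-2v_iv_0\bigr)\hat x_i+\sum_i v_i^2(\hat x_i-\hat X_{ii})+\bigl(\lfloor v_0^2\rfloor-v_0^2\bigr).
\]
The first two sums are nonnegative since $\hat X_{ij}\ge 0$ and $\hat x_i\ge 0$ by McCormick and the box. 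The third sum is nonnegative provided $\hat X_{ii}\le \hat x_i$. This is the diagonal McCormick inequality; I will assume it is part of the McCormick constraints (it is the $i=j$ instance of $X_{i,j}\le x_i$). The last term is greater than $-1$. Hence the Eigen-CG left-hand side at $(\hat x,\hat X)$ is greater than $-1$, so the violation is less than $1$.

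\textbf{Step 2 (norm lower bound from sign patterns).} This is the key combinatorial step. Split the support of $v$ into $P=\{i: v_i>0\}$ and $N=\{i:v_i<0\}$, with $|P|=p$, $|N|=q$ and $p+q=k$. Every pair $i<j$ inside $P$ or inside $N$ has $2v_iv_j>0$, so $\beta_{ij}=\lceil 2v_iv_j\rceil\ge 1$ and contributes at least $1$ to $\|(\alpha,\beta,\gamma)\|_2^2$. Cross pairs may round to $0$, so they are simply discarded. Therefore
\[
\|(\alpha,\beta,\gamma)\|_2^2\ \ge\ \binom{p}{2}+\binom{q}{2}.
\]
By convexity this is minimized over $p+q=k$ at $p=q=k/2$ (relaxing integrality), giving $\binom{p}{2}+\binom{q}{2}\ge 2\cdot\tfrac{k}{2}\cdot\tfrac{(k/2-1)}{2}=\tfrac{k(k-2)}{4}$.

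\textbf{Step 3 (conclude).} Dividing the Step 1 bound by the Step 2 bound gives a depth of at most $1\big/\sqrt{k(k-2)/4}=2/\sqrt{k(k-2)}$.

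The main subtlety is in Step 1: justifying that the diagonal terms may be dropped, which needs $\hat X_{ii}\le\hat x_i$. If the paper's McCormick set \eqref{eq:McCormick} is read as excluding $i=j$, this condition must be added explicitly or taken from the box constraints on $X$ in the relaxation. A second point is that the bound is meaningful only for $k\ge 3$, which should be noted. Finally, I would remark that tightness, if "maximum" is meant literally, would require exhibiting an equal split $p=q$ with all cross products rounding to $0$ and a point attaining violation close to $1$. I would treat that as a separate example rather than part of the bound.
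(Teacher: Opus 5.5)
Your proposal is correct and follows the paper's proof essentially step for step. You bound the violation by $v_0^2-\lfloor v_0^2\rfloor<1$ using the eigen-cut and the nonnegativity of $\hat X_{ij}$ and $\hat x_i$, and you lower-bound the norm by $\binom{|P|}{2}+\binom{|N|}{2}\ge k(k-2)/4$ using same-sign pairs. The diagonal condition you flag, $\hat X_{ii}\le\hat x_i$, is used silently in the paper when it replaces $v_i^2\hat X_{ii}$ by $v_i^2\hat x_i$. Also note that the depth's denominator is the norm of the non-constant coefficients only, excluding $\gamma$; your Step 2 bound still applies because it uses only the $\beta$ entries.
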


\begin{proof}
We know that $\mbox{E-CG}(v_0,v)$ yields the inequality
\begin{equation}\label{eq:CG}
\sum_{i > j} \lceil 2v_iv_j \rceil X_{ij} + \sum_{i} \lceil v_i^2 + 2v_iv_0 \rceil x_i + \lfloor v^2_0 \rfloor \geq 0. 
\end{equation}

Since $(\hat{x}, \hat{X})$ satisfies the SDP constraint \eqref{SDP-constraint}, we have that
\[
\sum_{i > j}  2v_iv_j  \hat{X}_{ij} + \sum_{i} ( v_i^2 + 2v_iv_0 ) \hat{x}_i +  v^2_0  \geq 0. \]

Let $g$ be the vector corresponding to the non-constant part of \eqref{eq:CG}. The depth of $\mbox{E-CG}(v_0,v)$ can be upper bounded as follows:
\begin{align*}
    & \frac{1}{\|g\|}\left( - \lfloor v^2_0 \rfloor - \sum_{i > j} \lceil 2v_iv_j \rceil \hat{X}_{ij} - \sum_{i} \lceil v_i^2 + 2v_iv_0 \rceil \hat{x}_i \right) \\
    \leq \, & \frac{1}{\|g\|}\left( - \lfloor v^2_0 \rfloor - \sum_{i > j}  2v_iv_j  \hat{X}_{ij} - \sum_{i} ( v_i^2 + 2v_iv_0 ) \hat{x}_i \right)\\
    \leq \, & \frac{1}{\|g\|}\left( - \lfloor v^2_0 \rfloor + v^2_0 \right)\\
    \leq\, & \frac{1}{\|g\|_2}.
\end{align*}
Then, observe that

\begin{align*}
\|g\|^2_2 &= \sum_{i < j} \lceil 2v_iv_j \rceil ^2 + \sum_{i} \lceil v_i^2 + 2v_iv_0\rceil^2 \geq  \sum_{i < j} \lceil 2v_iv_j \rceil^2,
\end{align*}
where the inequality follows because we dropped a square term. Now, define
\[P=\{i\,:\, v_i > 0\},\quad N = \{i\,:\, v_i < 0\}. \]
Note that if $i,j\in P$ or $i,j\in N$, then $\lceil 2v_iv_j \rceil^2 \geq 1$. Thus,
\begin{align*}
\|g\|^2_2 &\geq  \sum_{i < j} \lceil 2v_iv_j \rceil^2 \\
&=  \sum_{i < j,\, \lceil 2v_iv_j \rceil^2 \geq 1} \lceil 2v_iv_j \rceil^2 \\
&\geq  \sum_{i < j,\, i,j \in P} \lceil 2v_iv_j \rceil^2 +   \sum_{i < j,\, i,j \in N} \lceil 2v_iv_j \rceil^2 \\
&\geq  {|P|\choose 2} + {|N|\choose 2}.
\end{align*}

To lower bound this last expression, let $k := \|v\|_0$ and  note that $|P|+|N|=k$. Thus, we can consider the following optimization problem:
\begin{align*}
\min\quad & {p \choose 2} + {n \choose 2}\\
\text{s.t.}\quad & p+n = k \\
& p,n \geq 0,
\end{align*}
or equivalently
\begin{align*}
\min\quad & {p \choose 2} + {k-p \choose 2}\\
\text{s.t.}\quad & k-p \geq 0 \\
& p \geq 0.
\end{align*}
This is a unidimensional convex optimization problem, whose optimal solution can be easily checked to be $p^*=k/2$.
Therefore,
\begin{align*}
\|g\|^2_2 &\geq  {p^* \choose 2} + {k-p^* \choose 2} =  \frac{k^2 -2 k}{4}.
\end{align*}
From here, we obtain the result.\qed
\end{proof}

In the case of inequalities of $\mathcal{F}_2$ (which include BH), we can derive an upper bound on their depth that additionally depends on $\|v\|_2$.
\begin{proposition}
\label{proposition_depth-BH}
Consider $(v_0,v)\in \mathbb{R}^{n+1}$ such that $\mbox{E-CG}(v_0,v) \in \mathcal{F}_2$. Let $(\hat{x}, \hat{X}) \not\in \mbox{QPB}_n$ be such that it satisfies the SDP constraint \eqref{SDP-constraint} and the McCormick inequalities \eqref{eq:McCormick}. Then, the maximum violation (depth of cut) of $(\hat{x}, \hat{X})$ for the inequality $\mbox{E-CG}(v_0,v)$ is
$$\frac{1}{\sqrt{(\|v\|_0 - 1)}\cdot\|v\|_2},$$
where $\|v\|_0$ is the number of non-zero entries of $v$.
\end{proposition}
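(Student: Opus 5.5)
The plan is to reuse the two-step structure of the proof of Proposition~\ref{proposition_depth}: first bound the numerator of the depth, then lower bound the norm of the coefficient vector. This time the lower bound on the norm will exploit the integrality conditions that define $\mathcal{F}_2$. Let $k:=\|v\|_0$ and let $g$ be the vector of non-constant coefficients of $\mbox{E-CG}(v_0,v)$. I assume $k\ge 2$; for $k=1$ the claimed bound is $+\infty$ and there is nothing to prove.

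\emph{Numerator.} Since $\mbox{E-CG}(v_0,v)\in\mathcal{F}_2$, the definition gives $v_i^2\in\mathbb{Z}$, $2v_iv_j\in\mathbb{Z}$ and $2v_iv_0\in\mathbb{Z}$. Hence every non-constant coefficient is already integral and no rounding occurs: $\lceil 2v_iv_j\rceil = 2v_iv_j$ and $\lceil v_i^2+2v_iv_0\rceil = v_i^2+2v_iv_0$. The violation at $(\hat x,\hat X)$ is therefore
\[
-\lfloor v_0^2\rfloor - \sum_{i<j}2v_iv_j\hat X_{ij} - \sum_i (v_i^2+2v_iv_0)\hat x_i .
\]
The eigen-cut \eqref{eq:eigencut} holds at $(\hat x,\hat X)$ because $(\hat x,\hat X)$ satisfies \eqref{SDP-constraint}. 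Using it, the violation is at most $v_0^2-\lfloor v_0^2\rfloor<1$, exactly as in the chain of inequalities in the proof of Proposition~\ref{proposition_depth}. So the depth is at most $1/\|g\|_2$.

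\emph{Denominator.} I drop the $x_i$-coefficients, so that $\|g\|_2^2\ge \sum_{i<j}4v_i^2v_j^2$. The key observation is that each nonzero $v_i$ has $v_i^2$ a positive integer, hence $v_i^2\ge 1$. Rewriting the double sum,
\[
\sum_{i<j}4v_i^2v_j^2 \;=\; 2\sum_{i:\,v_i\neq 0} v_i^2\sum_{j\neq i,\,v_j\neq 0} v_j^2 .
\]
For each fixed $i$ with $v_i\ne0$, the inner sum has $k-1$ terms, each at least $1$, so it is at least $k-1$. This gives $\|g\|_2^2\ge 2(k-1)\|v\|_2^2 \ge (k-1)\|v\|_2^2$. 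Taking square roots and combining with the numerator bound yields the claimed depth bound $1/(\sqrt{k-1}\,\|v\|_2)$, in fact with an extra factor $1/\sqrt2$ to spare.

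\emph{Main obstacle.} The crux is the denominator step. There one must see how to turn the quartic expression $\sum_{i<j} v_i^2v_j^2$ into something linear in $\|v\|_2^2$, and this is exactly where the $\mathcal{F}_2$ condition $v_i^2\in\mathbb{Z}$, giving $v_i^2\geq 1$ on the support, is needed. Without that condition, i.e.\ for general Eigen-CG inequalities, only the weaker sign-based counting of Proposition~\ref{proposition_depth} is available.
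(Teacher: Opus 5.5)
Your proof is correct and follows the paper's argument essentially line by line. It uses the same numerator bound $v_0^2-\lfloor v_0^2\rfloor<1$ from the unrounded eigen-cut, and the same denominator bound from dropping the $x_i$-coefficients and using $v_i^2\ge 1$ on the support; the paper additionally discards the factor $2$ that you keep, so your constant is slightly better.

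One step that you and the paper both leave implicit is replacing $\sum_i v_i^2\hat X_{ii}$ in the eigen-cut by $\sum_i v_i^2\hat x_i$. This needs $\hat X_{ii}\le\hat x_i$, i.e.\ the diagonal McCormick (secant) inequality, and that is where the McCormick hypothesis actually enters. Without it the bound fails: for $n=4$, $\hat x_i=\tfrac12$, $\hat X$ the identity, $v=(1,1,1,1)$, $v_0=-\tfrac32$, the point satisfies the SDP and off-diagonal McCormick constraints, yet the depth is $1/\sqrt{10}>1/(2\sqrt3)$.
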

\begin{proof}
We follow the same proof as before, with the following difference:
\begin{eqnarray*}
\|g\|^2_2 &=& \sum_{i < j} (2v_iv_j)^2 + \sum_{i} (v_i^2 + 2v_iv_0)^2 \\
&\geq & \sum_{i, j \in [k],\ i \neq j} (v_iv_j)^2 \\
&\geq& (k -1) \sum_{i \in [k]}  v_i^2,
\end{eqnarray*}
where the first inequality follows because we dropped a square term, and the second inequality follows from the fact that $v_i^2 \in \mathbb{Z}$ and $v_i \neq 0$ implies that $v_i^2 \geq 1$.

Therefore we obtain that $\|g\|_2 \geq \sqrt{(\|v\|_0 - 1)}\cdot \|v\|_2$ and the result follows.\qed
\end{proof}

\section{Computational experiments}

In this section, we describe our computational experiments, which are designed with two main purposes. Firstly, to test empirically if Eigen-CG inequalities beyond McCormick \eqref{eq:McCormick} and triangle \eqref{eq:triangle} can have a non-negligible impact on the dual bounds of non-convex QCQPs. And secondly, to verify if the depth bounds of Propositions \ref{proposition_depth} and \ref{proposition_depth-BH} are reflected empirically.
Since we have not yet found any Eigen-CG cuts that cannot be expressed as BH inequalities, we limit Eigen-CG cuts to BH inequalities in this section.

We remark that, in this work, we are not focused on efficiently computing cutting planes, but rather on understanding their expressive potential. This makes our approach complementary to previous work that has considered separation procedures for (subclasses of) BH inequalities. 
We refer the reader to Section \ref{sec:literature} for references to these previous works.
In our case, we aim to explore BH separation as exhaustively as possible at the expense of computational efficiency. As we explain below, we rely on full enumeration for generating sparse inequalities and on a nonconvex optimization problem for more general cases; in the latter, we employ heuristics to construct cuts within a reasonable time, but we do not impose a strict structure on them beforehand. Additionally, we strive to provide a systematic evaluation of how density affects the strength of BH inequalities, both in isolation and in conjunction with the standard SDP+McCormick relaxation—an aspect that, to the best of our knowledge, has not been examined in this way in prior work.

\subsection{Computational set-up}

\subsubsection{Instances}
We consider the BIQ instances from the Biq Mac library \cite{wiegele2007binary}. These are instances with a nonconvex quadratic objective function and binary variables.
We model these instances as non-convex QCQPs over continuous variables in the following way:
\begin{align*}
\min_{x} \quad & x^\T Q x \\
\text{s.t.} \quad & x_i(1 - x_i) = 0, \quad  i=1,\ldots, n \\
& x \in [0,1]^n
\end{align*}

We chose these instances since the traditional SDP relaxation provides a good dual bound, but it does not close the optimality gap. This gives us a trade-off that allows us to distinguish the effects of the different inequalities we consider more clearly.

We use the following 30 instances from the Biq Mac library \cite{wiegele2007binary} in our experiments:
$$\texttt{be100.1--be100.10}, \quad 
\texttt{be120.8.1--be120.8.10}, \quad 
\texttt{be150.8.1--be150.8.10}.
$$

\paragraph{Baseline relaxations}
For each instance, we consider the relaxation obtained from dropping the constraint $X - xx^\T = 0$ in \eqref{QCQP-Ext} and adding one of the following options:\\
\begin{enumerate}
\item[(i)] all McCormick inequalities;
\item[(ii)] all McCormick inequalities together with all triangle inequalities;
\item[(iii)] all McCormick inequalities and $X - xx^\T \succeq 0$; and
\item[(iv)] all McCormick inequalities, all triangle inequalities, and $X - xx^\T \succeq 0$.\\
\end{enumerate}

We will extract the dual bounds obtained by each of these settings, and evaluate how much they can be improved by adding {BH} cuts on top.
We note that, in the case of triangle inequalities, we will also consider variants given by only adding violated ones in a cutting plane fashion.
We describe these below.

Since we cannot solve the BIQ instances to optimality within a $4$-hour time limit, we report the following optimality gap (GAP) as

\[
\textrm{GAP} (\%)= | \textrm{UB} -\textrm{LB} | /|\textrm{LB}|\times 100,
\]
where $``\textrm{UB}" $ is the value of the best solution found by Gurobi, and $``\textrm{LB}"$ is the dual bound obtained by any of the relaxations we construct here.

\subsubsection{Separation routines} \label{sec:separation}

As mentioned earlier, in our experiments, we would like to test the performance of Eigen-CG{---BH, more specifically---}cuts beyond McCormick and triangle inequalities.
We do so with two different approaches.\\

\paragraph{Eigen-CG inequalities for $n\in \{4,5\}$} For these dimensions, we know that all the facets of $\mbox{BQP}_n$ are BH inequalities.
Since these are small-dimensional objects, we can simply enumerate all such facets and use them as a lookup table. This means that, whenever we want to separate $(\hat{x},\hat{X})$ with an Eigen-CG inequality of sparsity $n\in \{4,5\}$, we will simply enumerate all possible $n\times n$ submatrices of \(\left[ \begin{array}{cc}
1 & x^\T \\
x & X
\end{array} 
\right]\) and check the precomputed facets of $\mbox{BQP}_n$ for possible cuts.\\

\paragraph{Eigen-CG inequalities for $n \geq 6$}
To generate denser inequalities, we introduce a BH separator that enables systematic generation of BH cuts.
Specifically, at a solution $(\hat{x},\hat{X})$, the separator aims at solving the following non-convex integer quadratic program 

\begin{align}
\label{eq:bh_sep_ip}
v_{\mathrm{cut}}
=
\min_{w_0,w\in\mathbb{Z}^n} \, & w_0(w_0-1)
+ \sum_{i\in[n]}w_i (w_i + 2 w_0 -1) \hat{x}_i
+ \sum_{1\leq i < j \leq n} 2w_i w_j \hat{X}_{ij}\\
\text{s.t. }\, & L_i \le w_i \le U_i,\quad  \forall i\in[n] \nonumber
\end{align}
where $L_i, U_i$ are predetermined bounds. In our implementation, we set $L_i=-2, U_i=2$.
If $v_{\mathrm{cut}}<0$ (in our implementation, we use a violation tolerance of
$v_{\mathrm{cut}}<-0.01$), we obtain a violated BH inequality.\\

Motivated by Proposition~\ref{proposition_depth}, we control the sparsity of $w$ by introducing auxiliary variables $u$ satisfying
\begin{align}
\label{eq:l1_sparsity}
u_i \ge w_i,\, 
u_i \ge -w_i,u_i\leq \bar{u}_i,\, \forall i\in[n],\, 
\sum_{i\in[n]} u_i \le \hat{U},
\end{align}
In our experiments, we set $\bar{u}_i=2$ for each $i\in[n]$ and $\hat{U}=10$.\\

As can be expected, solving each problem \eqref{eq:bh_sep_ip} is computationally expensive.
And even if we are more concerned with expressiveness than with efficiency, separation times can be prohibitively large.
Therefore, to reduce the computational burden, we extract a smaller subset of indices from the current solution $(\hat{x}, \hat{X})$ using a greedy refinement procedure. 
Starting from the full matrix, rows and the corresponding columns are removed iteratively. Let $R \subseteq \{1,\dots,n\}$ denote the index set of the current matrix. At each iteration, for each $j \in R$, we compute the smallest eigenvalue of the principal submatrix indexed by $R \setminus \{j\}$. The index $j$ attaining the minimum of these values is removed. The procedure terminates when $|R| = \lfloor 0.15 n \rfloor$.
From these remaining indices, we randomly select $\floor{0.10n}$ indices to construct a submatrix, based on which \eqref{eq:bh_sep_ip} is solved. For each separation problem, we impose either a 10-second or a 30-second time limit, and gather all feasible solutions within the time limit; the detailed procedure is described in Algorithm~\ref{alg_bh_sequential}.

In our implementation, to generate a larger pool of BH cuts, we repeat this random selection procedure $100$ times; each time selecting $\floor{0.10n}$ indices and solving the corresponding $v_{\mathrm{cut}}$ problem.

{As a final remark, we note that for $n=6$ we can also enumerate all facets of $\mbox{BQP}_n$; however, we do not use this enumeration as a lookup table since not all facets are Eigen-CG. This implies that (1) using non-Eigen-CG inequalities does not align with our purpose of evaluating the effectiveness of Eigen-CG, and (2) filtering to only facets of $\mbox{BQP}_6$ that are Eigen-CG may not capture \emph{all} potential expressiveness of Eigen-CG.}

\subsubsection{Hardware and software} All experiments are run on a computer with an Intel(R) Xeon(R) Gold 6258R processor running at 2.7 GHz, with up to four threads used. All instances are executed in Python 3.11.4, with calls to the solvers Gurobi \cite{gurobi} (version 12.0.1 with default settings) or MOSEK \cite{mosek} (version 11.0.14 with default settings).
In particular, Gurobi is used to solve the BH separation problem \eqref{eq:bh_sep_ip}, while MOSEK is employed for the SDP relaxations with valid inequalities.

\subsection{Results I: baselines} 

Table~\ref{table_be_100_bench} reports the objective values for baseline relaxations {\it{(i)–(iv)}} for instances $\texttt{be100.1--be100.10}$, together with the optimality gap between relaxation {\it{(iv)}} and the best upper bound obtained by Gurobi. We observe that relaxation {\it{(iv)}} successfully closes the gap for instances \texttt{be100.2}, \texttt{be100.3}, \texttt{be100.4}, and \texttt{be100.6}.
In all these cases, the gap is closed only when the triangle inequalities are added.

\begin{table}[t]
\centering
\caption{Numerical Results of Relaxations {\it{(i)-(iv)}} on Instances $\texttt{be100.1--be100.10}$.}
\setlength{\tabcolsep}{0.8pt} 
\renewcommand{\arraystretch}{1} 
\label{table_be_100_bench}
\scriptsize
\begin{center}
\begin{tabular}{|c|r|r|r|r|rr|r|}
\hline
& \multicolumn{1}{c|}{}                         & \multicolumn{1}{c|}{}                                                                                    & \multicolumn{1}{c|}{}                                                                            & \multicolumn{1}{c|}{}                                                                                          & \multicolumn{2}{c|}{Gurobi}                                                     & \multicolumn{1}{c|}{}                             \\ \cline{6-7}
\multirow{-2}{*}{Instances} & \multicolumn{1}{c|}{\multirow{-2}{*}{(i) MC}} & \multicolumn{1}{c|}{\multirow{-2}{*}{\begin{tabular}[c]{@{}c@{}}(ii) MC\\ \& all triangle\end{tabular}}} & \multicolumn{1}{c|}{\multirow{-2}{*}{\begin{tabular}[c]{@{}c@{}}(iii) MC\\ \& SDP\end{tabular}}} & \multicolumn{1}{c|}{\multirow{-2}{*}{\begin{tabular}[c]{@{}c@{}}(iv) MC \& SDP\\ \& all triangle\end{tabular}}} & \multicolumn{1}{c|}{UB}                               & \multicolumn{1}{c|}{LB} & \multicolumn{1}{c|}{\multirow{-2}{*}{(iv)   Gap}} \\ \hline
be100.1                     & -31482.50                                     & -12715.33                                                                                                & -9892.69                                                                                         & -9769.21                                                                                                       & \multicolumn{1}{r|}{-9748.00}                         & -10848.67               & 0.22\%                                            \\ \hline
be100.2                     & -31463.50                                     & -12447.83                                                                                                & -8958.84                                                                                         & {\color[HTML]{000000} -8837.00}                                                                                & \multicolumn{1}{r|}{{\color[HTML]{000000} -8837.00}}  & -9656.83                & 0.00\%                                            \\ \hline
be100.3                     & -30959.50                                     & -12128.17                                                                                                & -8818.41                                                                                         & {\color[HTML]{000000} -8758.00}                                                                                & \multicolumn{1}{r|}{{\color[HTML]{000000} -8758.00}}  & -9685.11                & 0.00\%                                            \\ \hline
be100.4                     & -31760.00                                     & -13118.83                                                                                                & -10113.02                                                                                        & {\color[HTML]{000000} -10028.00}                                                                               & \multicolumn{1}{r|}{{\color[HTML]{000000} -10028.00}} & -10851.92               & 0.00\%                                            \\ \hline
be100.5                     & -31051.50                                     & -12146.50                                                                                                & -8288.34                                                                                         & {\color[HTML]{000000} -8076.50}                                                                                & \multicolumn{1}{r|}{{\color[HTML]{000000} -8028.00}}  & -9071.95                & 0.60\%                                            \\ \hline
be100.6                     & -31356.00                                     & -12471.50                                                                                                & -9135.21                                                                                         & {\color[HTML]{000000} -9045.00}                                                                                & \multicolumn{1}{r|}{{\color[HTML]{000000} -9045.00}}  & -10064.84               & 0.00\%                                            \\ \hline
be100.7                     & -32013.00                                     & -13005.17                                                                                                & -9560.91                                                                                         & -9425.07                                                                                                       & \multicolumn{1}{r|}{-9413.00}                         & -10438.23               & 0.13\%                                            \\ \hline
be100.8                     & -31937.00                                     & -13368.66                                                                                                & -9987.95                                                                                         & -9817.87                                                                                                       & \multicolumn{1}{r|}{-9800.00}                         & -11006.55               & 0.18\%                                            \\ \hline
be100.9                     & -30005.00                                     & -11111.33                                                                                                & -6960.06                                                                                         & -6773.50                                                                                                       & \multicolumn{1}{r|}{-6770.00}                         & -7867.90                & 0.05\%                                            \\ \hline
be100.10                    & -31104.50                                     & -12206.33                                                                                                & -8053.58                                                                                         & -7854.44                                                                                                       & \multicolumn{1}{r|}{-7790.00}                         & -8972.14                & 0.83\%                                            \\ \hline
\end{tabular}
\end{center}
\end{table}

Table \ref{table_be_120_bench} presents results for {\it{(i)–(iv)}} on instances $\texttt{be120.8.1--be120.8.10}$. In several instances, such as $\texttt{be120.8.4}$, $\texttt{be120.8.5}$, and $\texttt{be120.8.10}$, the gap is fully closed after triangle inequalities are added, whereas other cases still exhibit gaps between 0.35\% and 1.21\%.

\begin{table}[t]
\centering
\caption{Numerical Results of Relaxations {\it{(i)-(iv)}} on Instances $\texttt{be120.8.1--be120.8.10}$.}
\setlength{\tabcolsep}{0.8pt} 
\renewcommand{\arraystretch}{1} 
\label{table_be_120_bench}
\scriptsize
\begin{center}
\begin{tabular}{|c|r|r|r|r|rr|r|}
\hline
& \multicolumn{1}{c|}{}                         & \multicolumn{1}{c|}{}                                                                                    & \multicolumn{1}{c|}{}                                                                            & \multicolumn{1}{c|}{}                                                                                          & \multicolumn{2}{c|}{Gurobi}                                                     & \multicolumn{1}{c|}{}                             \\ \cline{6-7}
\multirow{-2}{*}{Instances} & \multicolumn{1}{c|}{\multirow{-2}{*}{(i) MC}} & \multicolumn{1}{c|}{\multirow{-2}{*}{\begin{tabular}[c]{@{}c@{}}(ii) MC\\ \& all triangle\end{tabular}}} & \multicolumn{1}{c|}{\multirow{-2}{*}{\begin{tabular}[c]{@{}c@{}}(iii) MC\\ \& SDP\end{tabular}}} & \multicolumn{1}{c|}{\multirow{-2}{*}{\begin{tabular}[c]{@{}c@{}}(iv) MC \& SDP\\ \& all triangle\end{tabular}}} & \multicolumn{1}{c|}{UB}                               & \multicolumn{1}{c|}{LB} & \multicolumn{1}{c|}{\multirow{-2}{*}{(iv)   Gap}} \\ 
\hline
be120.8.1                  & -35971.00                                & -14052.67                                         & -9582.27                                     & -9340.85                                                  & \multicolumn{1}{r|}{-9229.00}  & -10729.57               & 1.21\%                                             \\ \hline
be120.8.2                  & -35427.00                                & -14005.00                                         & -9893.23                                     & -9692.37                                                  & \multicolumn{1}{r|}{-9623.00}  & -10904.69               & 0.72\%                                             \\ \hline
be120.8.3                  & -36248.50                                & -14332.00                                         & -10231.09                                    & -10018.53                                                 & \multicolumn{1}{r|}{-9979.00}  & -11239.16               & 0.40\%                                             \\ \hline
be120.8.4                  & -36028.50                                & -14607.67                                         & -11056.25                                    & -10925.00                                                 & \multicolumn{1}{r|}{-10925.00} & -12024.28               & 0.00\%                                             \\ \hline
be120.8.5                  & -35218.00                                & -13814.83                                         & -10530.60                                    & -10405.00                                                 & \multicolumn{1}{r|}{-10405.00} & -11433.52               & 0.00\%                                             \\ \hline
be120.8.6                  & -35552.00                                & -13587.33                                         & -9206.51                                     & -8980.68                                                  & \multicolumn{1}{r|}{-8907.00}  & -10297.10               & 0.83\%                                             \\ \hline
be120.8.7                  & -37102.50                                & -15319.83                                         & -11644.57                                    & -11459.97                                                 & \multicolumn{1}{r|}{-11420.00} & -12774.37               & 0.35\%                                             \\ \hline
be120.8.8                  & -36053.00                                & -14522.50                                         & -10386.83                                    & -10142.88                                                 & \multicolumn{1}{r|}{-10045.00} & -11543.61               & 0.97\%                                             \\ \hline
be120.8.9                  & -35239.50                                & -13896.33                                         & -9794.11                                     & -9596.85                                                  & \multicolumn{1}{r|}{-9553.00}  & -10952.12               & 0.46\%                                             \\ \hline
be120.8.10                 & -35237.50                                & -13855.67                                         & -10052.89                                    & -9960.00                                                  & \multicolumn{1}{r|}{-9960.00}  & -11049.11               & 0.00\%                                             \\ \hline
\end{tabular}
\end{center}
\end{table}

A consistent pattern is observed for the larger $\texttt{be150.8.1--be150.8.10}$ instances. In Table \ref{table_be_150_bench} we observe that the baseline relaxations {\it{(i)–(iv)}} achieve moderate tightness, with remaining gaps ranging from 0.44\% to 1.25\%.

In all three Tables \ref{table_be_100_bench}, \ref{table_be_120_bench}, and \ref{table_be_150_bench}, we observe that incorporating McCormick and triangle inequalities on top of the SDP relaxation leads to noticeable improvement compared with the purely linear relaxations {\it{(i)–(ii)}}. 

\begin{table}[t]
\centering
\caption{Numerical Results of Relaxations {\it{(i)-(iv)}} on Instances $\texttt{be150.8.1--be150.8.10}$.}
\setlength{\tabcolsep}{0.8pt} 
\renewcommand{\arraystretch}{1} 
\label{table_be_150_bench}
\scriptsize
\begin{center}
\begin{tabular}{|c|r|r|r|r|rr|r|}
\hline
& \multicolumn{1}{c|}{}                         & \multicolumn{1}{c|}{}                                                                                    & \multicolumn{1}{c|}{}                                                                            & \multicolumn{1}{c|}{}                                                                                          & \multicolumn{2}{c|}{Gurobi}                                                     & \multicolumn{1}{c|}{}                             \\ \cline{6-7}
\multirow{-2}{*}{Instances} & \multicolumn{1}{c|}{\multirow{-2}{*}{(i) MC}} & \multicolumn{1}{c|}{\multirow{-2}{*}{\begin{tabular}[c]{@{}c@{}}(ii) MC\\ \& all triangle\end{tabular}}} & \multicolumn{1}{c|}{\multirow{-2}{*}{\begin{tabular}[c]{@{}c@{}}(iii) MC\\ \& SDP\end{tabular}}} & \multicolumn{1}{c|}{\multirow{-2}{*}{\begin{tabular}[c]{@{}c@{}}(iv) MC \& SDP\\ \& all triangle\end{tabular}}} & \multicolumn{1}{c|}{UB}                               & \multicolumn{1}{c|}{LB} & \multicolumn{1}{c|}{\multirow{-2}{*}{(iv)   Gap}} \\ 
\hline
be150.8.1                  & -56680.00                                & -21891.50                                         & -14101.40                                    & -13762.52                                                 & \multicolumn{1}{r|}{-13621.00} & -15496.35               & 1.04\%                                             \\ \hline
be150.8.2                  & -56860.00                                & -22373.00                                         & -14323.11                                    & -14018.49                                                 & \multicolumn{1}{r|}{-13857.00} & -15807.34               & 1.17\%                                             \\ \hline
be150.8.3                  & -57017.50                                & -22514.67                                         & -15351.14                                    & -15090.83                                                 & \multicolumn{1}{r|}{-14968.00} & -16996.74               & 0.82\%                                             \\ \hline
be150.8.4                  & -56683.00                                & -22244.50                                         & -14428.90                                    & -14163.61                                                 & \multicolumn{1}{r|}{-14102.00} & -15965.60               & 0.44\%                                             \\ \hline
be150.8.5                  & -55277.50                                & -21482.50                                         & -14625.55                                    & -14390.51                                                 & \multicolumn{1}{r|}{-14317.00} & -16302.01               & 0.51\%                                             \\ \hline
be150.8.6                  & -56987.00                                & -22526.67                                         & -15194.51                                    & -14834.58                                                 & \multicolumn{1}{r|}{-14651.00} & -16863.75               & 1.25\%                                             \\ \hline
be150.8.7                  & -57586.00                                & -23120.50                                         & -16162.70                                    & -15902.68                                                 & \multicolumn{1}{r|}{-15737.00} & -17766.95               & 1.05\%                                             \\ \hline
be150.8.8                  & -57053.00                                & -22858.67                                         & -15630.37                                    & -15376.82                                                 & \multicolumn{1}{r|}{-15268.00} & -17321.10               & 0.71\%                                             \\ \hline
be150.8.9                  & -55692.00                                & -21308.83                                         & -13560.56                                    & -13267.00                                                 & \multicolumn{1}{r|}{-13117.00} & -15144.38               & 1.14\%                                             \\ \hline
be150.8.10                 & -55953.50                                & -21974.83                                         & -14660.53                                    & -14389.32                                                 & \multicolumn{1}{r|}{-14232.00} & -16301.89               & 1.11\%                                             \\ \hline
\end{tabular}
\end{center}
\end{table}

\subsection{Results II: cuts on top of SDP via lookup tables}

When using the relaxation (iv), the number of constraints becomes very large.
While in this work, we are not focused on efficiency, to be able to simply test denser Eigen-CG inequalities, we need more manageable formulations.

To mitigate this computational burden, rather than adding all constraints simultaneously, we add only the violated ones iteratively in a cutting plane fashion.
Specifically, instead of {\it{(iv)}}, starting from setting {\it{(iii)}}, we first identify the \emph{violated} triangle inequalities and sort them in descending order of their violation magnitudes.
After resolving the model, we identify the violated $\mathrm{BQP}_4$ facets (via a lookup table, as described in Section \ref{sec:separation}) and sort them by their violation levels, using a tolerance of $10^{-3}$. We perform this separation procedure for $\mathrm{BQP}_4$ facets once.
The resulting model, consisting of all McCormick inequalities, the SDP relaxation, violated triangle inequalities, and violated $\mathrm{BQP}_4$ facets, is referred to as relaxation {\it{(v)}}. 

Building on relaxation {\it{(v)}}, we perform the same separation procedure for $\mathrm{BQP}_5$: using the precomputed enumeration of all $\mathrm{BQP}_5$ facets, we identify those that are violated (using a violation tolerance of $10^{-3}$), and sort them by their violation levels. We perform this separation procedure for $\mathrm{BQP}_5$ facets once. The resulting model, consisting of all McCormick inequalities, the SDP relaxation, violated triangle inequalities, and both violated $\mathrm{BQP}_4$ and $\mathrm{BQP}_5$ facets, is referred to as relaxation {\it{(vi)}}.

In Table~\ref{table_be_100_our_results}, we show the gaps for relaxations {\it{(v)}} and {\it{(vi)}} on instances whose gap was not closed by {\it{(iv)}}. We also report the running time and the number of additional violated cuts added.
The reported running time for relaxations {\it{(v)}} and {\it{(vi)}} includes both the time required to solve the model and the time spent identifying the violated facets.

\begin{table}[t]
\centering
\caption{Numerical Results of Relaxations {\it{(v)–(vi)}} on Instances \texttt{be100.1--be100.10}. Instances where the gap was closed by (iv) are excluded.}
\setlength{\tabcolsep}{0.8pt} 
\renewcommand{\arraystretch}{1} 
\label{table_be_100_our_results}
\scriptsize
\begin{center}
\begin{tabular}{|c|rrr|rrr|r|}
\hline
\multirow{2}{*}{Instances} 
& \multicolumn{3}{c|}{\makecell{(v) MC \& violated\\(triangle \& $\mathrm{BQP}_4$) \& SDP}} 
& \multicolumn{3}{c|}{\makecell{(vi) MC \& violated\\(triangle \& $\mathrm{BQP}_4$ \& $\mathrm{BQP}_5$) \& SDP}} 
& \multicolumn{1}{c|}{\multirow{2}{*}{(vi) Gap}} \\
\cline{2-7}
& \multicolumn{1}{c|}{Value}    & \multicolumn{1}{c|}{\# added $\mathrm{BQP}_4$ facets} & \multicolumn{1}{c|}{Time (s)} & \multicolumn{1}{c|}{Value}    & \multicolumn{1}{c|}{\# added $\mathrm{BQP}_5$ facets} & \multicolumn{1}{c|}{Time (s)} & \multicolumn{1}{c|}{}                           \\ \hline
be100.1  & \multicolumn{1}{r|}{-9750.28} & \multicolumn{1}{r|}{42894} & 269.17 
 & \multicolumn{1}{r|}{-9748.00} & \multicolumn{1}{r|}{87283} & 2603.72 & 0.00\% \\ \hline
be100.5  & \multicolumn{1}{r|}{-8039.07} & \multicolumn{1}{r|}{32064} & 228.86 
 & \multicolumn{1}{r|}{-8029.75} & \multicolumn{1}{r|}{55637} & 2567.30 & 0.02\% \\ \hline
be100.7  & \multicolumn{1}{r|}{-9413.00} & \multicolumn{1}{r|}{40519} & 245.66 
 & \multicolumn{3}{r|}{gap closed in (v)} & 0.00\% \\ \hline
be100.8  & \multicolumn{1}{r|}{-9800.00} & \multicolumn{1}{r|}{27289} & 203.68 
 & \multicolumn{3}{r|}{gap closed in (v)} & 0.00\% \\ \hline
be100.9  & \multicolumn{1}{r|}{-6770.00} & \multicolumn{1}{r|}{41151} & 220.52 
 & \multicolumn{3}{r|}{gap closed in (v)} & 0.00\% \\ \hline
be100.10 & \multicolumn{1}{r|}{-7808.60} & \multicolumn{1}{r|}{19721} & 202.39 
 & \multicolumn{1}{r|}{-7795.13} & \multicolumn{1}{r|}{46416} & 2517.58 & 0.07\% \\ \hline
\end{tabular}
\end{center}
\end{table}

Table~\ref{table_be_100_our_results} shows that relaxation {\it{(v)}} closes the gap for instances \texttt{be100.7}, \texttt{be100.8}, and \texttt{be100.9}, whereas relaxation {\it (vi)} closes the gap for instance \texttt{be100.1}. For the remaining two cases, \texttt{be100.5} and \texttt{be100.10}, the gaps are reduced from 0.60\% and 0.83\% (under relaxation {\it{(iv)}}) to 0.02\% and 0.07\%, respectively.
These results indicate that incorporating violated facet inequalities, particularly $\mathrm{BQP}_4$ and $\mathrm{BQP}_5$ facets, on top of the SDP relaxation, can substantially strengthen the formulation and improve relaxation quality.

The same reports for instances $\texttt{be120.8.1--be120.8.10}$ are summarized in Table \ref{table_be_120_our_results}.

\begin{table}[t]
\centering
\caption{Numerical Results of Relaxations {\it{(v)–(vi)}} on Instances \texttt{be120.8.1--be120.8.10}. Instances where the gap was closed by (iv) are excluded.}
\setlength{\tabcolsep}{0.8pt} 
\renewcommand{\arraystretch}{1} 
\label{table_be_120_our_results}
\scriptsize
\begin{center}
\begin{tabular}{|c|rrr|rrr|r|}
\hline
\multirow{2}{*}{Instances} 
& \multicolumn{3}{c|}{\makecell{(v) MC \& violated\\(triangle \& $\mathrm{BQP}_4$) \& SDP}} 
& \multicolumn{3}{c|}{\makecell{(vi) MC \& violated\\(triangle \& $\mathrm{BQP}_4$ \& $\mathrm{BQP}_5$) \& SDP}} 
& \multicolumn{1}{c|}{\multirow{2}{*}{(vi) Gap}} \\
\cline{2-7}
& \multicolumn{1}{c|}{Value}    & \multicolumn{1}{c|}{\# added $\mathrm{BQP}_4$ facets} & \multicolumn{1}{c|}{Time (s)} & \multicolumn{1}{c|}{Value}    & \multicolumn{1}{c|}{\# added $\mathrm{BQP}_5$ facets} & \multicolumn{1}{c|}{Time (s)} & \multicolumn{1}{c|}{}                           \\ \hline
be120.8.1  & \multicolumn{1}{r|}{-9277.81}  & \multicolumn{1}{r|}{27872}  & 422.99  & \multicolumn{1}{r|}{-9258.51}  & \multicolumn{1}{r|}{70405}  & 6063.66  & 0.32\% \\ \hline
be120.8.2  & \multicolumn{1}{r|}{-9638.72}  & \multicolumn{1}{r|}{30585}  & 449.04  & \multicolumn{1}{r|}{-9625.63}  & \multicolumn{1}{r|}{67741}  & 6027.92  & 0.03\% \\ \hline
be120.8.3  & \multicolumn{1}{r|}{-9981.26}  & \multicolumn{1}{r|}{49826}  & 452.79  & \multicolumn{1}{r|}{-9979.00}  & \multicolumn{1}{r|}{103365} & 6392.58  & 0.00\% \\ \hline
be120.8.6  & \multicolumn{1}{r|}{-8930.08}  & \multicolumn{1}{r|}{36184}  & 453.52 & \multicolumn{1}{r|}{-8914.52}  & \multicolumn{1}{r|}{73671}  & 6340.47  & 0.08\% \\ \hline
be120.8.7  & \multicolumn{1}{r|}{-11422.20} & \multicolumn{1}{r|}{43941}  & 488.66  & \multicolumn{1}{r|}{-11420.00} & \multicolumn{1}{r|}{85759}  & 6435.14  & 0.00\% \\ \hline
be120.8.8  & \multicolumn{1}{r|}{-10079.86} & \multicolumn{1}{r|}{27437}  & 439.79 & \multicolumn{1}{r|}{-10062.14} & \multicolumn{1}{r|}{68115}  & 6447.97  & 0.17\% \\ \hline
be120.8.9  & \multicolumn{1}{r|}{-9555.40}  & \multicolumn{1}{r|}{39785}  & 473.47  & \multicolumn{1}{r|}{-9553.00}  & \multicolumn{1}{r|}{77075}  & 6406.32  & 0.00\% \\ \hline
\end{tabular}
\end{center}
\end{table}

As shown in Table \ref{table_be_120_our_results}, the addition of violated facet cuts also tightens the relaxation significantly in these instances.
Relaxation {\it{(v)}} already provides significant improvement for most instances, and relaxation {\it{(vi)}} completes closing the gap for $\texttt{be120.8.2}$, $\texttt{be120.8.3}$, $\texttt{be120.8.7}$, and $\texttt{be120.8.9}$.
For the remaining cases, such as $\texttt{be120.8.1}$ and $\texttt{be120.8.8}$, the residual gaps are modest (0.32\% and 0.17\%, respectively).

As before, a consistent pattern is observed for the larger $\texttt{be150.8.1--be150.8.10}$ instances in Table \ref{table_be_150_our_results}.

\begin{table}[t]
\centering
\caption{Numerical Results of Relaxations {\it{(v)–(vi)}} on Instances \texttt{be150.8.1--be150.8.10}. Instances where the gap was closed by (iv) are excluded.}
\setlength{\tabcolsep}{0.8pt} 
\renewcommand{\arraystretch}{1} 
\label{table_be_150_our_results}
\scriptsize
\begin{center}
\begin{tabular}{|c|rrr|rrr|r|}
\hline
\multirow{2}{*}{Instances} 
& \multicolumn{3}{c|}{\makecell{(v) MC \& violated\\(triangle \& $\mathrm{BQP}_4$) \& SDP}} 
& \multicolumn{3}{c|}{\makecell{(vi) MC \& violated\\(triangle \& $\mathrm{BQP}_4$ \& $\mathrm{BQP}_5$) \& SDP}} 
& \multicolumn{1}{c|}{\multirow{2}{*}{(vi) Gap}} \\
\cline{2-7}
& \multicolumn{1}{c|}{Value}    & \multicolumn{1}{c|}{\# added $\mathrm{BQP}_4$ facets} & \multicolumn{1}{c|}{Time (s)} & \multicolumn{1}{c|}{Value}    & \multicolumn{1}{c|}{\# added $\mathrm{BQP}_5$ facets} & \multicolumn{1}{c|}{Time (s)} & \multicolumn{1}{c|}{}                           \\ \hline
be150.8.1  & \multicolumn{1}{r|}{-13681.81} & \multicolumn{1}{r|}{44261} & 1179.22 & \multicolumn{1}{r|}{-13653.03} & \multicolumn{1}{r|}{111883} & 19929.11 & 0.24\% \\ \hline
be150.8.2  & \multicolumn{1}{r|}{-13945.78} & \multicolumn{1}{r|}{44222} & 1173.56 & \multicolumn{1}{r|}{-13921.14} & \multicolumn{1}{r|}{112806} & 19784.94 & 0.46\% \\ \hline
be150.8.3  & \multicolumn{1}{r|}{-15032.16} & \multicolumn{1}{r|}{55058} & 1183.21 & \multicolumn{1}{r|}{-15007.50} & \multicolumn{1}{r|}{121648} & 19986.54 & 0.26\% \\ \hline
be150.8.4  & \multicolumn{1}{r|}{-14106.23} & \multicolumn{1}{r|}{53227} & 1267.24 & \multicolumn{1}{r|}{-14102.00} & \multicolumn{1}{r|}{115113} & 19887.74 & 0.00\% \\ \hline
be150.8.5  & \multicolumn{1}{r|}{-14338.13} & \multicolumn{1}{r|}{73947} & 1347.33 & \multicolumn{1}{r|}{-14321.23} & \multicolumn{1}{r|}{128066} & 19866.72 & 0.03\% \\ \hline
be150.8.6  & \multicolumn{1}{r|}{-14750.02} & \multicolumn{1}{r|}{43452} & 1184.93 & \multicolumn{1}{r|}{-14724.07} & \multicolumn{1}{r|}{113376} & 19752.62 & 0.50\% \\ \hline
be150.8.7  & \multicolumn{1}{r|}{-15836.11} & \multicolumn{1}{r|}{45535} & 1174.42 & \multicolumn{1}{r|}{-15814.57} & \multicolumn{1}{r|}{121053} & 19667.76 & 0.49\% \\ \hline
be150.8.8  & \multicolumn{1}{r|}{-15315.31} & \multicolumn{1}{r|}{56900} & 1277.34 & \multicolumn{1}{r|}{-15293.87} & \multicolumn{1}{r|}{116453} & 20877.77 & 0.17\% \\ \hline
be150.8.9  & \multicolumn{1}{r|}{-13195.24} & \multicolumn{1}{r|}{46391} & 1318.26 & \multicolumn{1}{r|}{-13171.32} & \multicolumn{1}{r|}{112957} & 20663.50 & 0.41\% \\ \hline
be150.8.10 & \multicolumn{1}{r|}{-14323.98} & \multicolumn{1}{r|}{47342} & 1253.24 & \multicolumn{1}{r|}{-14300.41} & \multicolumn{1}{r|}{115901} & 20665.29 & 0.48\% \\ \hline
\end{tabular}
\end{center}
\end{table}

As summarized in Table \ref{table_be_150_our_results}, relaxation {\it{(vi)}} closes the gap entirely for $\texttt{be150.8.4}$ and reduces the remaining gaps below 0.50\% for all other instances.
Although computational time increases with instance size, reaching about 20,000 seconds for the largest case, the overall strengthening effect of the added facet cuts remains evident.

In summary, across all benchmark sets ($\texttt{be100}$, $\texttt{be120.8}$, and $\texttt{be150.8}$), the additional facet-based cuts introduced in relaxations {\it{(v)–(vi)}} consistently enhance the quality of the SDP relaxation.
The improvements are achieved through a targeted, violation-based selection of $\mathrm{BQP}_4$ and $\mathrm{BQP}_5$ facets, leading to stronger bounds without introducing all possible constraints at once.
This demonstrates that these cuts are expressive and, since they are based on (relatively small) lookup tables, can provide a practical tool for further tightening the SDP relaxation.

As a final remark, we note that the marginal contribution of denser cuts (e.g., the ones derived from $\mathrm{BQP}_5$ versus $\mathrm{BQP}_4$) does degrade, as anticipated.
In the following results, we evaluate this behavior.

\subsection{Results III: cuts on top of SDP via BH separator}

To test the contribution of denser cuts, we initially built experiments that would incorporate the BH separator described in Section \ref{sec:separation} on top of relaxation {\it{(vi)}} above. In these experiments, we observed that almost no violated cut was found, and that the gap closed was almost exactly the same. For this reason, we decided not to report these results.
This is already a strong indication that, beyond $\mbox{BQP}_5$, the cuts we can generate would not be useful. However, to further test this hypothesis, we devised a new experiment.

Instead of building on top of relaxation {\it{(vi)}}, we take a step back and consider relaxation {\it{(iv)}}, which includes the McCormick inequalities,  all triangle inequalities, and the SDP constraint. From the solution $(\tilde x, \tilde X)$ we obtain, we apply the greedy refinement procedure of Section \ref{sec:separation} to identify a submatrix to use in the BH separator \eqref{eq:bh_sep_ip}. We repeat this process a total of $30$ times. The details of this procedure are summarized in Algorithm~\ref{alg_bh_sequential}.

\begin{algorithm}[t]
\caption{BH Cuts Generation Procedure}
\label{alg_bh_sequential}
\begin{algorithmic}[1]
\State \textbf{Step 1:} Start from the current solution $(\tilde x, \tilde X)$ obtained from relaxation {\it{(iv)}}
\State \textbf{Step 2:}  Use the greedy refinement procedure to find the submatrix with dimension $\floor{0.15n}$ 
\State \textbf{Step 3:} Randomly select $\floor{0.10n}$ indices from Step~2 and generate cuts using problem~\eqref{eq:bh_sep_ip}; repeat this sampling-and-separation step $100$ times
\State \textbf{Step 4:} Resolve the problem to update $(\tilde x, \tilde X)$, and repeat Steps 2–4 for up to $30$ iterations, or until no new BH cuts are generated in two consecutive iterations
\end{algorithmic}
\end{algorithm}

 For this relaxation, we consider two settings: for each \eqref{eq:bh_sep_ip} problem, either a 10-second or a 30-second time limit is imposed. 
 The numerical results are summarized in Table~\ref{table_be_BH_SDP}. From this table, we observe that the BH separator does improve the solution quality in several instances, but often at a noticeably higher computational cost, particularly under the 30-second setting. 
 Moreover, in contrast with relaxation {\it{(vi)}}, the improvement in objective value is generally modest, indicating that explicit facet enumeration of $\mathrm{BQP}_4$ and $\mathrm{BQP}_5$ plays a crucial role in strengthening the SDP relaxation.
 In this sense, Table~\ref{table_be_BH_SDP} illustrates that denser BH cuts can provide some enhancement, but they do not replace the effectiveness of relaxation {\it{(vi)}}; rather, they reinforce the conclusion that sparser Eigen-CG cuts are the most effective.

\begin{table}[t]
\centering
\caption{Numerical Results of the BH Cuts Generation Procedure on Instances \texttt{be100}, \texttt{be120.8}, and \texttt{be150.8}.} 
\setlength{\tabcolsep}{0.8pt} 
\renewcommand{\arraystretch}{1} 
\label{table_be_BH_SDP}
\scriptsize
\begin{center}
\begin{tabular}{|c|rrr|rrr|r|}
\hline
\multirow{2}{*}{Instances} 
& \multicolumn{3}{c|}{\begin{tabular}[c]{@{}c@{}}MC \&   SDP \& all triangle \\ \& BH separation (10-second limit)\end{tabular}} 
& \multicolumn{3}{c|}{\begin{tabular}[c]{@{}c@{}}MC \&   SDP \& all triangle \\ \& BH separation (30-second limit)\end{tabular}} 
& \multirow{2}{*}{Relaxation {\it{(vi)}}} \\ \cline{2-7}
& \multicolumn{1}{c|}{Value} 
& \multicolumn{1}{c|}{\# BH added} 
& \multicolumn{1}{c|}{Running time} 
& \multicolumn{1}{c|}{Value} 
& \multicolumn{1}{c|}{\# BH added} 
& \multicolumn{1}{c|}{Running time} 
& \\ \hline
be100.1    & \multicolumn{1}{r|}{-9766.93} & \multicolumn{1}{r|}{854} & 5784.95  & \multicolumn{1}{r|}{-9766.32} & \multicolumn{1}{r|}{797}  & 6218.35   & -9748.00 \\ \hline
be100.5    & \multicolumn{1}{r|}{-8075.49} & \multicolumn{1}{r|}{179} & 5475.48  & \multicolumn{1}{r|}{-8073.90} & \multicolumn{1}{r|}{819}  & 15319.46  & -8029.75 \\ \hline
be100.7    & \multicolumn{1}{r|}{-9423.65} & \multicolumn{1}{r|}{447} & 5073.24  & \multicolumn{1}{r|}{-9423.72} & \multicolumn{1}{r|}{501}  & 9546.32   & gap closed in {\it{(v)}} \\ \hline
be100.8    & \multicolumn{1}{r|}{-9816.39} & \multicolumn{1}{r|}{412} & 5646.64  & \multicolumn{1}{r|}{-9816.95} & \multicolumn{1}{r|}{290}  & 5809.17   & gap closed in {\it{(v)}} \\ \hline
be100.9    & \multicolumn{1}{r|}{-6772.91} & \multicolumn{1}{r|}{241} & 2197.46  & \multicolumn{1}{r|}{-6773.22} & \multicolumn{1}{r|}{112}  & 1562.94   & gap closed in {\it{(v)}} \\ \hline
be100.10   & \multicolumn{1}{r|}{-7853.21} & \multicolumn{1}{r|}{237} & 6445.39  & \multicolumn{1}{r|}{-7847.96} & \multicolumn{1}{r|}{1400} & 30650.69  & -7795.13 \\ \hline
be120.8.1  & \multicolumn{1}{r|}{-9337.17} & \multicolumn{1}{r|}{551} & 42899.11 & \multicolumn{1}{r|}{-9338.45} & \multicolumn{1}{r|}{759}  & 35608.74  & -9258.51 \\ \hline
be120.8.2  & \multicolumn{1}{r|}{-9687.68} & \multicolumn{1}{r|}{622} & 34676.93 & \multicolumn{1}{r|}{-9688.94} & \multicolumn{1}{r|}{1203} & 30578.74  & -9625.63 \\ \hline
be120.8.3  & \multicolumn{1}{r|}{-10018.38} & \multicolumn{1}{r|}{24}  & 4439.44  & \multicolumn{1}{r|}{-10016.43} & \multicolumn{1}{r|}{1031} & 46577.68  & -9979.00 \\ \hline
be120.8.6  & \multicolumn{1}{r|}{-8977.66} & \multicolumn{1}{r|}{310} & 22505.86 & \multicolumn{1}{r|}{-8974.25} & \multicolumn{1}{r|}{1695} & 82379.42  & -8914.52 \\ \hline
be120.8.7  & \multicolumn{1}{r|}{-11456.62} & \multicolumn{1}{r|}{437} & 24372.29 & \multicolumn{1}{r|}{-11454.78} & \multicolumn{1}{r|}{1849} & 85714.81  & -11420.00 \\ \hline
be120.8.8  & \multicolumn{1}{r|}{-10141.72} & \multicolumn{1}{r|}{140} & 12851.19 & \multicolumn{1}{r|}{-10142.86} & \multicolumn{1}{r|}{9}    & 4709.49   & -10062.14 \\ \hline
be120.8.9  & \multicolumn{1}{r|}{-9595.96} & \multicolumn{1}{r|}{305} & 6791.66  & \multicolumn{1}{r|}{-9590.60} & \multicolumn{1}{r|}{2299} & 74028.76  & -9553.00 \\ \hline
be150.8.1  & \multicolumn{1}{r|}{-13761.66} & \multicolumn{1}{r|}{60}  & 18354.76 & \multicolumn{1}{r|}{-13760.52} & \multicolumn{1}{r|}{279}  & 57996.73  & -13653.03 \\ \hline
be150.8.2  & \multicolumn{1}{r|}{-14018.49} & \multicolumn{1}{r|}{0}   & 5019.27  & \multicolumn{1}{r|}{-14016.14} & \multicolumn{1}{r|}{310}  & 76056.06  & -13921.14 \\ \hline
be150.8.3  & \multicolumn{1}{r|}{-15090.12} & \multicolumn{1}{r|}{24}  & 13703.38 & \multicolumn{1}{r|}{-15086.22} & \multicolumn{1}{r|}{776}  & 88043.01  & -15007.50 \\ \hline
be150.8.4  & \multicolumn{1}{r|}{-14163.51} & \multicolumn{1}{r|}{3}   & 7805.61  & \multicolumn{1}{r|}{-14162.19} & \multicolumn{1}{r|}{141}  & 68758.12  & -14102.00 \\ \hline
be150.8.5  & \multicolumn{1}{r|}{-14390.02} & \multicolumn{1}{r|}{33}  & 12683.10 & \multicolumn{1}{r|}{-14387.25} & \multicolumn{1}{r|}{495}  & 77851.09  & -14321.23 \\ \hline
be150.8.6  & \multicolumn{1}{r|}{-14834.24} & \multicolumn{1}{r|}{11}  & 10041.34 & \multicolumn{1}{r|}{-14832.02} & \multicolumn{1}{r|}{288}  & 48515.18  & -14724.07 \\ \hline
be150.8.7  & \multicolumn{1}{r|}{-15902.68} & \multicolumn{1}{r|}{0}   & 5092.76  & \multicolumn{1}{r|}{-15900.04} & \multicolumn{1}{r|}{264}  & 46351.17  & -15814.57 \\ \hline
be150.8.8  & \multicolumn{1}{r|}{-15376.76} & \multicolumn{1}{r|}{4}   & 7447.67  & \multicolumn{1}{r|}{-15375.60} & \multicolumn{1}{r|}{166}  & 27194.65  & -15293.87 \\ \hline
be150.8.9  & \multicolumn{1}{r|}{-13266.78} & \multicolumn{1}{r|}{24}  & 7673.11  & \multicolumn{1}{r|}{-13259.86} & \multicolumn{1}{r|}{958}  & 135163.28 & -13171.32 \\ \hline
be150.8.10 & \multicolumn{1}{r|}{-14389.15} & \multicolumn{1}{r|}{15}  & 10018.94 & \multicolumn{1}{r|}{-14388.78} & \multicolumn{1}{r|}{152}  & 18230.97  & -14300.41 \\ \hline
\end{tabular}
\end{center}
\end{table}

\subsection{Results IV: cuts added without the SDP relaxation}

As a final set of experiments, we examine the effect of strengthening the relaxation without incorporating the SDP constraints.
Since Propositions \ref{proposition_depth} and \ref{proposition_depth-BH} state that denser Eigen-CG cuts are weak on top of the SDP, we would like to test if, at least empirically, the same holds without the SDP constraint.

In these experiments, we start from a formulation that includes all McCormick inequalities. Then, we iteratively add violated triangle inequalities and violated facet inequalities from $\mathrm{BQP}_4$ and $\mathrm{BQP}_5$.
In the absence of SDP constraints, the number of violated facets can be substantial. Hence, identifying and sorting violated facets becomes computationally demanding and is limited by memory capacity. To mitigate this burden, we set the violation tolerance to $10^{-2}$. 
For violated $\mathrm{BQP}_4$ facets, we add at most 50,000 violated $\mathrm{BQP}_4$ cuts. For $\mathrm{BQP}_5$ cuts, rather than exhaustively enumerating all facets, we examine the $\binom{n}{5}$ index combinations, capped at 100,000 $\mathrm{BQP}_5$ index combinations, and identify the violated facets from this subset.
The combination of McCormick inequalities, violated triangles, and violated $\mathrm{BQP}_4$ facets is referred to as relaxation {\it{(vii)}}. Extending this by including the violated $\mathrm{BQP}_5$ facets yields relaxation {\it{(viii)}}.

To further enrich the cut pool beyond explicit facet enumeration, we incorporate the BH separation procedure with a 30-second time limit for each \eqref{eq:bh_sep_ip} problem; this enhanced approach is referred to as relaxation {\it{(ix)}}. 

The numerical results for instances \texttt{be100.1--be100.10} are shown in Table~\ref{table_be_100_without_sdp}.

\begin{table}[t]
\centering
\caption{Numerical Results of Relaxations {\it{(vii)–(ix)}} on Instances \texttt{be100.1--be100.10}.}
\setlength{\tabcolsep}{0.7pt} 
\renewcommand{\arraystretch}{1} 
\label{table_be_100_without_sdp}
\scriptsize
\begin{center}
\begin{tabular}{|c|rrr|rrr|rrr|}
\hline
\multirow{4}{*}{Instances} & \multicolumn{3}{c|}{\makecell{(vii) MC \& violated\\(triangle \& $\mathrm{BQP}_4$) }}           & \multicolumn{3}{c|}{\makecell{(viii) MC \& violated\\(triangle \& $\mathrm{BQP}_4$ \& $\mathrm{BQP}_5$) }}                                                   & \multicolumn{3}{c|}{\makecell{(ix) MC \& violated\\(triangle \& $\mathrm{BQP}_4$ \& $\mathrm{BQP}_5$)\\
\& BH separation (30-second limit)}}                                  \\ \cline{2-10} 
& \multicolumn{1}{c|}{Value}     & \multicolumn{1}{c|}{\makecell{added \\ $\mathrm{BQP}_4$\\ facets}} & \multicolumn{1}{c|}{Time (s)} & \multicolumn{1}{c|}{Value}     & \multicolumn{1}{c|}{\makecell{added \\ $\mathrm{BQP}_5$\\ facets}} & \multicolumn{1}{c|}{Time (s)} & \multicolumn{1}{c|}{Value}     & \multicolumn{1}{c|}{\makecell{added BH\\ ineqs}} & \multicolumn{1}{c|}{Time (s)} \\ \hline
be100.1                    & \multicolumn{1}{r|}{-11701.23} & \multicolumn{1}{r|}{50000}                         & 205.76                        & \multicolumn{1}{r|}{-11511.68} & \multicolumn{1}{r|}{56115}                         & 110.26                        & \multicolumn{1}{r|}{-10704.12} & \multicolumn{1}{r|}{33108}            & 18350.84                      \\ \hline
be100.2                    & \multicolumn{1}{r|}{-11389.77} & \multicolumn{1}{r|}{50000}                         & 196.07                        & \multicolumn{1}{r|}{-11176.09} & \multicolumn{1}{r|}{47992}                         & 98.76                         & \multicolumn{1}{r|}{-10290.18} & \multicolumn{1}{r|}{33793}            & 18476.56                      \\ \hline
be100.3                    & \multicolumn{1}{r|}{-10919.44} & \multicolumn{1}{r|}{50000}                         & 197.19                        & \multicolumn{1}{r|}{-10740.72} & \multicolumn{1}{r|}{40138}                         & 93.23                         & \multicolumn{1}{r|}{-9902.13}  & \multicolumn{1}{r|}{33687}            & 18324.92                      \\ \hline
be100.4                    & \multicolumn{1}{r|}{-12067.43} & \multicolumn{1}{r|}{50000}                         & 196.73                        & \multicolumn{1}{r|}{-11838.56} & \multicolumn{1}{r|}{42007}                         & 87.69                         & \multicolumn{1}{r|}{-11036.65} & \multicolumn{1}{r|}{33543}            & 18509.79                      \\ \hline
be100.5                    & \multicolumn{1}{r|}{-11070.20} & \multicolumn{1}{r|}{50000}                         & 197.39                        & \multicolumn{1}{r|}{-10884.21} & \multicolumn{1}{r|}{46405}                         & 100.48                        & \multicolumn{1}{r|}{-9978.32}  & \multicolumn{1}{r|}{33917}            & 18703.51                      \\ \hline
be100.6                    & \multicolumn{1}{r|}{-11348.03} & \multicolumn{1}{r|}{50000}                         & 194.88                        & \multicolumn{1}{r|}{-11140.01} & \multicolumn{1}{r|}{47770}                         & 100.79                        & \multicolumn{1}{r|}{-10248.21} & \multicolumn{1}{r|}{33430}            & 18286.14                      \\ \hline
be100.7                    & \multicolumn{1}{r|}{-11881.65} & \multicolumn{1}{r|}{50000}                         & 196.21                        & \multicolumn{1}{r|}{-11699.44} & \multicolumn{1}{r|}{78635}                         & 142.60                        & \multicolumn{1}{r|}{-10801.59} & \multicolumn{1}{r|}{33826}            & 19101.44                      \\ \hline
be100.8                    & \multicolumn{1}{r|}{-12223.01} & \multicolumn{1}{r|}{50000}                         & 201.36                        & \multicolumn{1}{r|}{-12039.60} & \multicolumn{1}{r|}{43846}                         & 94.17                         & \multicolumn{1}{r|}{-11172.92} & \multicolumn{1}{r|}{33645}            & 18878.65                      \\ \hline
be100.9                    & \multicolumn{1}{r|}{-9864.12}  & \multicolumn{1}{r|}{50000}                         & 196.27                        & \multicolumn{1}{r|}{-9693.58}  & \multicolumn{1}{r|}{60423}                         & 120.97                        & \multicolumn{1}{r|}{-8774.43}  & \multicolumn{1}{r|}{34057}            & 18253.72                      \\ \hline
be100.10                   & \multicolumn{1}{r|}{-11820.24} & \multicolumn{1}{r|}{50000}                         & 205.22                        & \multicolumn{1}{r|}{-11462.26} & \multicolumn{1}{r|}{276797}                        & 437.75                        & \multicolumn{1}{r|}{-10303.32} & \multicolumn{1}{r|}{34643}            & 24632.07                      \\ \hline
\end{tabular}
\end{center}
\end{table}

Although the computational cost of relaxation {\it{(ix)}} is significantly higher due to the repeated solution of \eqref{eq:bh_sep_ip}, these results indicate that denser BH cuts provide meaningful additional tightening over relaxations {\it{(vii)–(viii)}}.
This is considerably different from the experiments with the SDP constraint.

The results also illustrate that, in the absence of SDP relaxation, the number of violated $\mbox{BQP}_4$ and $\mbox{BQP}_5$ increases rapidly with $n$.
For this reason, we present numerical results only for instances \texttt{be100.1--be100.10}; in higher-dimensional cases, numerical instability may arise, making reliable implementation of the BH procedure significantly more challenging.

Finally, we note that, even though denser BH inequalities gain a more predominant role in the absence of an SDP constraint, the overall quality of the relaxations is considerably lower than that of the SDP-based ones.

\section{Conclusions}

In this article, we introduced Eigen-CG inequalities, obtained by applying Chvátal–Gomory rounding to eigenvector inequalities for QCQPs, and studied their structure and limitations.
These inequalities are closely related to known inequalities in the literature, but to the best of our knowledge, an in-depth analysis like this one has not been conducted.

We identified several nested subclasses of Eigen-CG cuts with strict inclusions, but we showed that their conic closures coincide with the closure of BH inequalities. This indicates, from a new angle, how expressive BH inequalities already are.
We conjecture that Eigen-CG inequalities have the same conic closure as the BH inequalities, but we have not yet found a proof.
We also showed that Eigen-CG inequalities do not capture all facets of the Boolean Quadric Polytope in dimension six, making explicit the limits of this construction.

Additionally, we studied the effect of density on Eigen-CG cuts and showed that it is detrimental in the presence of an SDP relaxation. Specifically, we proved that the depth of Eigen-CG (and BH) inequalities deteriorates quickly as their support grows when the SDP and McCormick constraints are enforced. 

Finally, we present computational experiments to test these observations empirically.
We observe that adding sparse facet inequalities derived from low-dimensional BQP to SDP+McCormick relaxations yields significant improvements in dual bounds. In contrast, denser BH cuts provide only a limited additional benefit at a much higher computational cost. 

Overall, our results suggest that progress in strengthening SDP relaxations for QCQPs is more likely to come from identifying and exploiting sparse, well-structured inequalities rather than from pursuing increasingly dense eigenvector-based cuts.

\bibliographystyle{plain}
\bibliography{eigen_cg_refs.bib}

\end{document}